\renewcommand{\cal}[1]{{\mathcal #1}}         % for calligraphic letters
\newcommand{\RR}{{\mathbb R}}               % for real numbers
\newcommand{\TT}{{\mathbb T}}               % for the standard torus
\renewcommand{\Im}{\mbox{\rm Im}\,}           % imaginary part
\newcommand{\diag}{\mbox{\rm diag}}
\newcommand{\iu}{{\bf i\,}}
\newtheorem{theorem}{Theorem}[section]
\newtheorem{lemma}[theorem]{Lemma}
\newtheorem{proposition}[theorem]{Proposition}
\newtheorem{corollary}[theorem]{Corollary}
\newtheorem{definition}[theorem]{Definition}
\newenvironment{proof}[1][Proof]{\begin{trivlist}
\item[\hskip \labelsep {\bfseries #1}]}{\end{trivlist}
{\nobreak\quad\nobreak\hfill\nobreak{$\blacksquare$}}}
\newenvironment{remark}[1][Remark]{\begin{trivlist}
\item[\hskip \labelsep {\bfseries #1}]}{\end{trivlist}}
\def\lie#1{\Lscr_{#1}}
\def\Lie#1{\Lscr_{#1}}
\def\reali{\mathbb{R}}
\def\complessi{\mathbb{C}}
\def\toro{\mathbb{T}}
\def\interi{\mathbb{Z}}
\def\diag{\mathop{\rm diag}}
\def\det{\mathop{\rm det}}
\def\Im{\mathop{\rm Im}}
\def\vet#1{{\bm #1}}
\def\epsilon{{\varepsilon}}
\def\phi{{\varphi}}
\def\Bscr{{\cal B}}
\def\Cscr{{\cal C}}
\def\dscr{{\bold {diag}}}
\def\Lscr{{\cal L}}
\def\Oscr{{\cal O}}
\def\Pscr{{\cal P}}
\def\Xscr{{\cal X}}
\newcommand\Xh{{X_h}}
\newcommand\DXh{\Dif\Xh}
\newcommand{\comp}{{\!\:\circ\!\,}}
\newcommand\K{{K}}
\newcommand\Knew{\K_{new}}
\newcommand\W{{W}}
\newcommand\Wnew{\W_{new}}
\newcommand\lambdanew{\lambda_{new}}
\newcommand\E{{E}}
\newcommand\EK{{\E_\K}}
\newcommand\EW{{\E_\W}}
\newcommand\GammaOb{{\Gamma_{0,\beta}}}
\newcommand\Gammaab{\Gamma_{\alpha,\beta}}
\newcommand\DK{{\Dif\K}}
\newcommand{\Dlambda}{\Dif_\lambda}
\newcommand{\Dz}{\Dif _z}
\newcommand{\Dzz}{\Dif_{z z}}
\newcommand{\Dlambdaz}{\Dif_{\lambda z}}
\newcommand\Pinv{{P_{inv}}}
\newcommand\OK{{\Omega\comp\K}}
\newcommand{\DO}{\Dif \Omega}
\newcommand{\OmegaLL}{\Omega_{LL}}
\newcommand{\OmegaLN}{\Omega_{LN}}
\newcommand{\OmegaLW}{\Omega_{LW}}
\newcommand{\OmegaNW}{\Omega_{NW}}
\newcommand{\OmegaNN}{\Omega_{NN}}
\newcommand{\OmegaWW}{\Omega_{WW}}
\newcommand{\OmegaWN}{\Omega_{WN}}
\newcommand\etaK{{\eta_\K}}
\newcommand{\etaW}{\eta_\W}
\newcommand{\hatetaWL}{\hat\eta_\W^L}
\newcommand{\hatetaWN}{\hat\eta_\W^N}
\newcommand{\hatetaWW}{\hat\eta_\W^W}
\newcommand{\xiL}{\xi^L}
\newcommand{\xiN}{\xi^N}
\newcommand{\xiW}{\xi^W}
\newcommand{\xiWL}{\xi_{\W}^L}
\newcommand{\xiWN}{\xi_{\W}^N}
\newcommand{\xiWW}{\xi_{\W}^\W}
\renewcommand\to{\rightarrow}
\newcommand\Dif{ {\mbox{\rm D}} }
\newcommand{\blue}[1]{\bgroup\color{blue}{#1}\egroup}
\newcommand{\red}[1]{\bgroup\color{red}{#1}\egroup}
\newcommand{\magenta}[1]{\bgroup\color{magenta}{#1}\egroup}
\newcommand{\green}[1]{\bgroup\color{green}{#1}\egroup}
\title{\bf A parametrization algorithm to compute lower dimensional elliptic tori in Hamiltonian systems \thanks{{\it 2020
Mathematics Subject Classification.}  Primary: 37J25; 70H08; Secondary: 37M99.
{\it Key words and phrases:} lower dimensional invariant tori, KAM theory, parametrization method.}}
\author{ {\bf Chiara Caracciolo}\\
{\small Department of Mathematics, Uppsala University}\\
{\small Department of Mathematics, University of Padua}\\
{\bf Jordi-Llu\'is Figueras}\\
{\small Department of Mathematics, Uppsala University}\\
{\bf Alex Haro}\\
{\small Departament de Matem\`atiques i Inform\`atica, Universitat de Barcelona} \\ \small{and Centre de Recerca Matem\`atica}\\
{\small e-mail: 
  {\tt chiara.caracciolo@math.unipd.it, figueras@math.uu.se, alex@maia.ub.es}}\\
}
\begin{document}
%\linenumbers

\maketitle
\tableofcontents

%\listofchanges

% *** Le due righe seguenti sono probabilmente da togliere ***
\selectlanguage{british}
\thispagestyle{empty}

\begin{abstract}
We present an algorithm for the construction of lower dimensional elliptic 
tori in parametric Hamiltonian systems by means of the parametrization method with 
the tangent and normal 
frequencies being prescribed. This requires that the Hamiltonian system has as many parameters as 
the dimension of the normal dynamics, and the algorithm must adjust these 
parameters. We illustrate the methodology with an implementation of the algorithm  
computing $2$--dimensional elliptic tori in a system of $4$ coupled 
pendula (4 degrees of freedom).
\end{abstract}

\bigskip

%%%%%%%%%%%%%%%%%%%%%%%%%%%%%
%%%%%%%%%%%%%%%%%%%%%%%%%%%%%

\section{Introduction}
Lower dimensional elliptic tori are invariant manifolds under the action of a Hamiltonian 
on which the internal motion conjugates to rigid quasi-periodic dynamics with a 
number of frequencies less than the number of degrees of freedom, and the complementary linear 
dynamics is linearly conjugated by harmonic oscillators. The second order normal form around 
an elliptic torus is 
\[
N=e+\sum_{i=1}^d\omega_i y_i+\dfrac{1}{2}\sum_{j=1}^{n-d} \beta_j (u_j^2+v_j^2),
\]
being $e$ a constant, $d$ the dimension of torus, $\omega= (\omega_1, \dots, \omega_d)$ its 
internal frequency vector, and $\beta= (\beta_1,\dots,\beta_{n-d})$
the normal frequency vector.

The study of the persistence of lower dimensional tori in 
nearly integrable Hamiltonian systems is one of 
the natural extensions of KAM theory~(\cite{kolmo,Arnold,moser}).
One of the first studies 
was by Melnikov~\cite{melnikov}; the first
proofs were given by Moser~\cite{moser}, Eliasson~\cite{eliasson} and Bourgain \cite{Bourgain1997}; other proofs, using a quadratic perturbative method, were given by P\"oschel, see~\cite{poschel} 
and~\cite{poschelPDE}. 

In this work we present an algorithm to compute parameterizations of lower dimensional 
elliptic tori. 
The main 
motivation of presenting this approach is given 
by the necessity to improve applicability. Indeed, in original versions of KAM theory,
results are more of pure mathematical than 
physical interest: the existence of tori is subjugated to the extreme smallness of perturbing parameters 
(see \cite{Henon66, Laskar14} where there is a good account of how small 
the distance to integrability is in classical examples like the 3 body problem). 
However, numerical explorations 
and semi-analytical procedures highlight how these invariant objects persist for 
values of the parameter way larger than the ones analytically prescribed (see \cite{Henon66}). 
In other words, there is a huge gap between the 
tori we expect to exist and the tori for which we can actually prove the 
existence using classical techniques.

In the case of full dimensional KAM tori, this gap has been dramatically reduced by applying either
the techniques based on 
the computation of the Kolmogorov normal form (see~\cite{KAM-quad} and~\cite{kam-lin} 
for a quadratic and linear convergence with respect to the small parameter) 
or on the parametrization method (firstly introduced in~\cite{Delallave}, 
see~\cite{param-book} for a review). 
The main advantage of these methods 
is that they provide algorithms which can be implemented and, therefore, used to 
produce good quality approximations of the invariant solutions.
There have been extensions 
of these techniques to the case of lower dimensional tori
(\cite{Huguet2012, gio-loc-san, Kumar2022, Fernandez-Mora2024, Haro2021, Car-2022,FontichLlaveSire,Luque-Villanueva}). 
 
One of the advantages of the parametrization method is that it 
manipulates expressions with a number of variables equal to the dimension of the tori, while 
the normal form method requires as many variables as the dimension of the phase space.
Since on one hand 
this is not a problem from an analytical point of view, it can be a computational obstacle when considering 
applications with a moderate number of degrees of freedom (see, e.g.,~\cite{Car-Loc-2021}, where 
the algorithm described was basically unusable when increasing the number of nodes to $16$ 
in the FPU model, in view of the huge memory required to represent functions, even for 
$3$-dimensional elliptic tori). Another advantage of the parametrization method is that it can 
be quite easily converted to a computer-assisted proof, once that an analytical proof 
of the convergence of the algorithm to the solution is provided. This has already been 
done for what concerns KAM Lagrangian tori in~\cite{Figueras-Haro-Luque-2017} 
and~\cite{Figueras_2020}, therefore the extension to the lower dimensional case appears quite natural. 

In this paper 
the frequency vector that describes the transverse dynamics is fixed. This is 
particularly important when the final goal is to produce a formal proof. 
Indeed, in order to perform a step of the algorithm, we require a set of 
non-resonant frequencies. In particular, the standard non-resonance conditions in 
this problem are the Diophantine and the first and second Melnikov condition. 
From a semi-analytical point of view, that is when we compute an approximated invariant solution on a computer, it is not a problem to have different frequencies at each step, because we deal with finite Fourier expansions. Therefore, we only have to check 
that we are not encountering small order resonances for the frequencies that we 
encounter. However, for the proof of the convergence we need Diophantine and 
Melnikov condition. Without fixed frequencies, we would have different 
non-resonance conditions at each step. In such a case, the convergence of 
the procedure can be proved only in a  measure sense 
(see, e.g.,~\cite{poschel,gio-loc-san,Car-2022}, and~\cite{Bia-chi-Val}, 
where the size of the resonant region is measured to be $\Oscr(\epsilon^{b_1})$ with $b_1 < 1/2$).
To fix the frequencies, we consider an initial family of Hamiltonians depending on 
a number of parameters equal to the dimension of the transverse dynamics. 
Without these free parameters, the frequencies would be slightly corrected at 
each step (or, in other words, at each step we would solve a different 
equation). One must point out, however, that the algorithm described below 
could be easily modified to, instead of moving free parameters, 
change the frequencies at each step (see for example the projection and reducibility methods
in \cite{Haro2006}). 

This article is organised as follows: in Section~\ref{sec:equations} we 
introduce the notation and set the equations that we are going to consider; 
Section~\ref{sec:step} is devoted to the description of the generic step of the 
algorithm; Section~\ref{sec:appl} contains an application of the presented 
procedure to a dynamical system made of four pendula coupled by springs, 
and in Section~\ref{sec:concl} we report the conclusions and some comments 
about future extensions and applications. In the Appendix~\ref{app} we collect some technical proposition. 

\section{Setting and invariance equations}\label{sec:equations}
\subsection{General notation}
\label{sec:general}
We denote by $\toro^n = \reali^n/(2 \pi\interi)^n$ the $n$--dimensional torus.
The spaces of $n_1\times n_2$ matrices are denoted by $\reali^{n_1 \times n_2}$. In particular we use the notation $I_n$ and $O_n$ to denote the $n\times n$ identity matrix and the zero matrix, respectively. 

Given $v \in \reali^n$, $\diag(v)$ denotes the $n\times n$ matrix with 
its diagonal the ordered elements of $v$. Also, given an $n\times n$ matrix $A$, we denote by $\diag(A)$ the matrix that contains only the diagonal elements of $A$, while all the other elements are put to zero; $\dscr(A)$ is a vector with components the diagonal of $A$.
Given a $2n\times 2n$ matrix 
$B=
\begin{pmatrix}
B_{11} & B_{12}\\
B_{21} & B_{22}
\end{pmatrix}$
($B_{ij}$ are $n\times n$ matrices), we define 
$\Pscr(B)=
\begin{pmatrix}
\diag(B_{11}) & \diag(B_{12})\\
\diag(B_{21}) & \diag(B_{22})
\end{pmatrix}.
$

Given an analytic function 
$f: \toro^{d} \rightarrow \complessi$, we write its Fourier expansion as
$$
f(\theta) = \sum_{k \in \interi^d}\hat f_k e^{\iu k\cdot \theta}, \quad 
\hat f_k = \dfrac{1}{(2\pi)^d}\int_{\toro^d} u(\theta) e^{-\iu k \cdot \theta} d\theta 
$$
and use the notation $\langle f \rangle := \hat f_0$ for the average over $\theta$.

Finally, we will assume that functions live in the (scaled) Banach space of analytic functions and 
denote the norm by $\|\cdot\|$. Moreover, given functions $f,g_1,\dots,  g_n$, 
we write that $f$ is $\Oscr(g_1,\dots, g_n)$ meaning that there exists a constant $C$
such that $\|f\|\leq C(\|g_1\|+\cdots+\|g_n\|)$.

\subsubsection{Symplectic setting}

In this paper the phase space is an open set $U \subset\reali^{2n}$ with 
coordinates $z = (z_1, \ldots, z_{2n})$. 
It is endowed with a non-degenerate exact symplectic form $\vet \omega = d \vet \alpha$ 
for a certain 1-form $\vet \alpha$.
We assume $U$ is also endowed with a Riemannian metric $\vet g$ and an anti-involutive linear isomorphism
$\vet J: {\rm T}U\rightarrow {\rm T}U$, i.e., $ \vet J ^2= -I$, such that 
$\forall \ z \in U, \ \forall u,v \in {\rm T}_z U,\ \vet \omega_z(\vet J_z u,v) = \vet g_z(u,v)$. 
It is said that $(\vet \omega,\vet g, \vet J)$ is a compatible triple and that  
$\vet J$ endows $U$ with an almost-complex structure. Hence, the anti-involution preserves both $2$-forms $\vet \omega$ and $\vet g$.  

We represent the Riemannian metric $\vet g$, the almost-complex structure $\vet J$, the 
1-form $\vet \alpha$ and the 2-form $\vet \omega$ as the matrix maps 
$G:U\to \RR^{2n\times 2n}$, $J:U\to \RR^{2n\times 2n}$, 
$a:U\to \RR^n$ and $\Omega:U\to \RR^{2n\times2n}$, given by $a(z) = (a_1(z),\ldots, a_{2n}(z))^\top$ and 
$
\Omega(z) = \Dif a(z)^\top-\Dif a(z)$ satisfying $\det \Omega(z) \neq 0$.
Since $\vet \omega$ is closed, it follows that for any triplet $(r,s,t)$ 
$$
\frac{\partial \Omega_{r,s}}{\partial z_t} +\frac{\partial \Omega_{s,t}}{\partial z_r}+\frac{\partial \Omega_{t,r}}{\partial z_s}  = 0\,.  
$$
Moreover, $\Omega^\top = -\Omega$ and it is pointwise invertible. 
The metric condition of $\vet g$ reads as $G^\top = G$ and it is positive definite. 
Furthermore,  $\Omega  J  = -G$, $ J ^\top\Omega J  = \Omega$, $ J ^\top G  J  = G$.

\begin{remark} 
The standard 1-form $\vet \alpha_z$ is $\sum_{i= 1}^n z_{n+i} {\rm d} z_{i}$ the 2-form $\vet \omega_z$ is $\sum_{i= 1}^n {\rm d} z_{n+i} \wedge {\rm d} z_{i}$; so $a(z)= (z_{n+1}, \dots, z_{2n}, 0, \ldots, 0)$ and 
$\Omega(z)=\Omega_n:=\begin{pmatrix}O_n & -I_n \\ I_n & O_n\end{pmatrix}$. 
\end{remark}

Given a function $h:U\rightarrow \reali$ the corresponding Hamiltonian vector field 
$\Xh:U \rightarrow \reali^{2n}$ satisfies $i_{\Xh}\vet \omega = -{\rm d}h$ ($\vet \omega(\cdot, \Xh)={\rm d}h$). 
In coordinates, it satisfies 
$$
\Xh(z) = (\Omega(z))^{-1} ( \Dz  h(z))^\top\,.
$$

\subsection{Invariance equations for lower dimensional invariant elliptic tori}
Consider a family of Hamiltonians 
$h_\lambda : U\subset\reali^{2n}\rightarrow \reali$, depending on a parameter $\lambda \in \reali^{n-d}$. 
We look for a lower dimensional elliptic torus of dimension $d < n$
whose tangent and normal frequency vectors are $\omega\in \reali^d$ and $\beta\in\reali^{n-d}$, respectively. Both $\omega$ and $\beta$ are fixed and $\beta_i\neq \beta_j\neq 0$ for $i\neq j$.

A parametrization of an invariant torus $ \K:\toro^d \rightarrow \reali^{2n}$  and of its 
normal bundle $\W:\toro^d \rightarrow \reali^{2n\times 2(n-d)}$ 
must satisfy the following equations:
\begin{align}
\label{eq:invariance}
\begin{cases}
\Lie{\omega}{\K}+\Xh\comp(\K;\lambda) = 0\,
\\
\Lie{\omega}{\W}+ \DXh\comp(\K; \lambda)\W - \W \GammaOb = 0\,
\end{cases},
\end{align}
where $\Lie{\omega}$ is the \emph{Lie derivative} operator in the direction of $\omega$, that given a function $u:\reali^n\rightarrow \reali$ applies as
$
\Lie{\omega}{u} = - \Dif u \cdot \omega,
$
while $\GammaOb$ is the following $2(n-d)$-dimensional matrix
\begin{equation*}
\label{eq:gammaOb}
\GammaOb = \begin{pmatrix}
O_{n-d} &  -\diag(\beta)  \\ \diag(\beta) & O_{n-d}
\end{pmatrix}.
\end{equation*}

The first equation in \eqref{eq:invariance}
is the standard \emph{invariance equation} and it tells us that the torus is 
invariant for the Hamiltonian flow associated to $h$ and the motion on 
it is the rigid rotation with frequency vector $\omega$. 
Indeed, for every $\theta_0\in\toro^d$, $\phi(t) = \K(\omega t+\theta_0)$ satisfies 
$\Xh(\K(\omega t+\theta_0))=\Xh(\phi(t))=\dot \phi(t) = 
\DK(\omega t + \theta_0)\  \omega = -\Lie{\omega}{\K}_{|\omega t + \theta_0}$.

The second equation comes from the study of the \emph{linear dynamics} normal 
to the torus, that we impose to be conjugated to a constant linear vector field with matrix $\GammaOb$.
Again, if we consider $\phi(t) = \K(\theta + \omega t)$ with $\theta\in \toro^d$ and we take $z = \phi + v$ with $v$ small, then  
$$
\dot z = \dot \phi + \dot v = \Xh(\phi+v) = \Xh(\phi) + \DXh(\phi) v + h.o.t.,
$$
where $h.o.t.$ denotes \emph{high order terms}.
Since $\dot \phi = \Xh(\phi)$ because of the invariance, at first order $v$ satisfies the following equation: 
$$
\dot v = \DXh(\phi)v\, .
$$
The particular equation for $W$ comes from the reducibility property that we require to solve the equation. The shape of the equation is explained in Section~2.2 of~\cite{Luque-Villanueva} and will be more clear in the decription of the algorithm.

In  order to design our algorithm to solve the invariance equations~\eqref{eq:invariance}, we introduce a dummy parameter $\alpha\in\RR^{n-d}$ and denote
\begin{equation*}
\label{eq:gammaab}
\Gammaab = \begin{pmatrix}
\diag(\alpha) &  -\diag(\beta)  \\ \diag(\beta) & \diag(\alpha)
\end{pmatrix}.
\end{equation*}
Then, we define a Newton-like method to construct the exact solution, starting from the approximate solution $(K, W, \lambda, \alpha)$ satisfying
\begin{align}
\label{eq:invariance-appr}
\begin{cases}
\Lie{\omega}{\K}+\Xh\comp(\K;\lambda) = E_K\,
\\
\Lie{\omega}{\W}+ \DXh\comp(\K; \lambda)\W - \W \Gammaab = E_W\,
\end{cases},
\end{align}
where the invariance errors $E_K$ and $E_W$ are ``small''. 
The role of the parameter $\lambda$ is to introduce shear in the normal dynamics (non-vanishing normal torsion), making it possible the existence of tori with the normal dynamics's frequency $\beta$. The role of $\alpha$ is to facilitate the step by making it possible to cancel some averages (more details are deferred to Subsection \ref{subsection: choice lambda alpha}). An important issue is that symplectic properties will force \emph{a posteriori} that in fact the extra-parameter $\alpha$ in $\Gammaab$ must be zero if both invariance errors equal to zero, 
see Proposition \ref{lemma:Psym}.

For the success of the procedure (and, eventually, in order to produce a proof of convergence), some hypotheses are required. 
First, we need to assume some non-degeneracy condition. One condition is usually requested in KAM theorems and relies on the fact that we are fixing the frequency vector 
$\omega$. This condition in the parametrization method is equivalent to the invertibility 
of a torsion matrix, defined later. The second condition is about how 
to choose the parameter $\lambda$. In order to do that, we must assume that the family of Hamiltonians 
is non-degenerate in some sense, to be 
detailed during the presentation of the algorithm.

The standard non-resonance hypotheses on the frequency vectors 
$\omega$ and $\beta$ that we require are the following.
\begin{definition}
\label{diof+mel}
We say that $(\omega,\beta)\in \reali^d\times \reali^{n-d}$ satisfies Diophantine-Melnikov conditions
of type $(\gamma, \tau)$, with $\gamma>0$, $\tau > d-1$, if for all $k\in \interi^d\setminus\{0\}$, $i, j= 1, \dots, n-r$ with $i\neq j$, $|\beta_i|\neq |\beta_j|\neq 0$, and:
\begin{itemize}
\item[{\rm 0)}] $|k \cdot \omega|\geq \frac{\gamma}{|k|_1^\tau}$ (Diophantine condition);
\item[{\rm 1)}] $|k \cdot \omega -\beta_i| \geq \frac{\gamma}{|k|_1^\tau}$ (first Melnikov condition);
\item[{\rm 2)}] $|k \cdot \omega -\beta_i \pm \beta_j| \geq \frac{\gamma}{|k|_1^\tau}$ (second Melnikov condition).
\end{itemize}
\end{definition}
Let us remind that these conditions are quite generic, in the sense that the set 
of non-resonant frequencies has positive Lebesgue measure.

\subsection{Reducibility}
Given the matrix maps  $V_1:\TT^d\rightarrow \reali^{2n\times m_1}, V_2:\TT^d\rightarrow \reali^{2n\times m_2}$ we define the maps $\Omega_{V_1V_2},G_{V_1V_2}:\toro^d \rightarrow\reali^{m_1\times m_2}$ as
\begin{equation*}
\label{OmegaV1V2}
\Omega_{V_1 V_2}= V_1^\top \Omega\comp K V_2
\end{equation*} 
and 
\begin{equation*}
\label{GV1V2}
G_{V_1V_2} = V_1^\top G\comp K V_2.
\end{equation*}

We define the map $P:\toro^d\to \reali^{2n\times2n}$ as the three juxtaposed 
matrix maps as follows 
\begin{equation}\label{eq:P}
P(\theta)=(L(\theta)\ N(\theta)\ \W(\theta)),
\end{equation}
where $L, \ N: \toro^d\rightarrow \reali ^{2n\times d}$ are defined as 
\begin{align}
\label{L}
L&= \DK,\\
\label{N}
N&= LA+( J \comp \K) L B + \W C,
\end{align}
with $A, B:\toro^d \rightarrow \reali^{d\times d}$ and $C:\toro^d \rightarrow \reali^{2(n-d) \times d}$ 
are defined as
\begin{equation}
\label{N-elem}
B = G_{LL}^{-1}, \quad
C =  \OmegaWW^{-1} G_{WL}\, B, \quad
A = \frac 1 2 \, C^\top\, \OmegaWW\, C.
\end{equation}

%
%
%\begin{remark}
%The Fourier coefficients of the solution to $\Lie{\omega}{u}+\GammaOb u = v$ are
%$$
%\hat{u_k}^j = \frac{\iu k \cdot \omega \hat{v_k}^j+\beta_j \hat{v_k}^{j+n-d}}{(k\cdot \omega)^2 - \beta_j^2} \qquad 
%\hat{u_k}^{j+n-d} = -\frac{\beta_j \hat{v_k}^{j}-\iu k \cdot \omega \hat{v_k}^{j+n-d}}{(k\cdot \omega)^2 - \beta_j^2}.
%$$
%\end{remark}
Finally, define the operator 
\begin{equation*}
\Xscr_V = \Lie{\omega}{V}+\Dz \Xh\comp (K;\lambda) V 
\end{equation*}
with $V:\TT^d\rightarrow \RR^{2n\times m}$.

In the following, we discuss properties of $P$ that are a consequence of the invariance of $K$ and $W$. 

\begin{proposition}
\label{lemma:Psym}
Under the hypothesis that both $K$ and $W$ are invariant 
(i.e. the invariance errors $E_K$ and $E_W$ appearing in \eqref{eq:invariance-appr} are 
zero) then 
\begin{enumerate}
\item $\alpha=0$;
\item $P$ is symplectic, i.e.,
\begin{equation}
\label{sym}
P^\top \OK P = \Omega_{P P}= \begin{pmatrix}
\Omega_d & O_{2d\times2( n-d)} \\ O_{2(n-d)\times 2d} & \OmegaWW 
\end{pmatrix},
\end{equation}
is invertible and
\[
P^{-1}=\Pinv := \begin{pmatrix}
N^\top \Omega\comp K\\
-L^\top \Omega \comp K\\
\OmegaWW^{-1} W^\top \Omega \comp K
\end{pmatrix};\]

\item the linearized dynamics is reducible, that is 
\begin{equation}
\label{eq:reducibility}
P^{-1}\Xscr_P = \Lambda = \begin{pmatrix}
O_{d} & T & O_{d\times 2(n-d)} \\ O_{d} & O_{d} & O_{d\times 2(n-d)} \\ O_{2(n-d)\times d} & O_{2(n-d)\times d}& \GammaOb
\end{pmatrix}\,,
\end{equation}
where $T$ is a matrix called \emph{torsion matrix} and it is defined as
\begin{equation}
\label{eq:torsion}
T := N^\top  \Omega\comp K \Xscr_N = N^\top  \Omega\comp K(\Lie{\omega}{N}+\Dz \Xh \comp (K;\lambda)N).
\end{equation}
\end{enumerate} 
\end{proposition}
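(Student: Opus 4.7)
The plan is to hang everything on one symplectic Lie--derivative identity: for any matrix-valued maps $V_1,V_2$ on $\toro^d$, invariance of $\K$ together with closedness of $\vet\omega$ yields
\[
\Lie{\omega}{\Omega_{V_1 V_2}} \;=\; \Xscr_{V_1}^{\ttop}\,(\OK)\,V_2 \;+\; V_1^{\ttop}\,(\OK)\,\Xscr_{V_2}.
\]
This is just the pulled-back form of the Liouville relation $\Lie{\Xh}\vet\omega=0$ and supplies the only analytic input of the argument. Differentiating the $\K$-invariance with respect to $\theta$ gives $\Xscr_L=0$, while the $\W$-invariance gives $\Xscr_\W=\W\Gammaab$; these are the only two dynamical facts used in what follows.

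I would then read off the off-diagonal blocks of $\Omega_{PP}$ one by one. Taking $V_1=V_2=L$ gives $\Lie{\omega}{\OmegaLL}=0$, so $\OmegaLL$ is constant under the Diophantine rotation; exactness $\vet\omega=d\vet\alpha$ makes $\K^{*}\vet\omega$ an exact constant $2$-form on $\toro^d$, which must vanish, whence $\OmegaLL=O_d$. Taking $V_1=L,V_2=\W$ gives $\Lie{\omega}{\OmegaLW}=\OmegaLW\,\Gammaab$, whose Fourier modes are killed by the first Melnikov condition, so $\OmegaLW=0$. The remaining identities $\OmegaLN=-I_d$, $\OmegaNN=O_d$ and $\OmegaNW=O$ are then purely algebraic: inserting the definitions~\eqref{N}--\eqref{N-elem} of $N,A,B,C$ into $L^{\ttop}\,\Omega\comp\K\,N$ and the analogous bilinear forms, and using the compatibility relations $\Omega J=-G$, $J^{\ttop}\Omega J=\Omega$, collapses every cross term. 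This yields~\eqref{sym}, from which the explicit inverse $P^{-1}=\Pinv$ is immediate.

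The forcing $\alpha=0$ I would extract from the block $\OmegaWW$. The identity with $V_1=V_2=\W$ gives $\Lie{\omega}{\OmegaWW}=\Gammaab^{\ttop}\OmegaWW+\OmegaWW\,\Gammaab$; averaging over $\theta$ and using the second Melnikov condition on the oscillatory modes, $\OmegaWW$ turns out to be constant and to satisfy $\Gammaab^{\ttop}\OmegaWW+\OmegaWW\,\Gammaab=0$. Because $\OmegaWW$ is invertible, this is exactly the statement that $\Gammaab$ is Hamiltonian with respect to $\OmegaWW$, so its spectrum must be symmetric under $\mu\mapsto-\mu$. But that spectrum is $\{\alpha_j\pm\ii\beta_j\}_{j=1}^{n-d}$, and the standing hypothesis that the $|\beta_j|$ are nonzero and pairwise distinct forces the symmetry to pair each eigenvalue with its own conjugate, i.e.\ $\alpha_j=-\alpha_j$ for every $j$. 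Hence $\alpha=0$ and $\Gammaab=\GammaOb$. This spectral/symplectic pairing is the step I expect to be the conceptual heart of the proof: it is not a small-divisor computation but a symplectic accident.

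Reducibility~\eqref{eq:reducibility} is then a column-by-column verification of $\Pinv\,\Xscr_P$. The first $d$ columns vanish because $\Xscr_L=0$; the last $2(n-d)$ columns equal $\Pinv\,\W\,\GammaOb$ and collapse, via the formula for $\Pinv$ and the block structure~\eqref{sym}, to $(O,\,O,\,\GammaOb)^{\ttop}$. For the middle $d$ columns, reapplying the Lie--derivative identity to the now-constant blocks $\OmegaLN=-I_d$ and $\OmegaNW=O$ gives $L^{\ttop}\,\Omega\comp\K\,\Xscr_N=0$ and $\W^{\ttop}\,\Omega\comp\K\,\Xscr_N=0$, so only the top entry $N^{\ttop}\,\Omega\comp\K\,\Xscr_N=T$ of that column survives, producing the torsion block in position~$(1,2)$ of $\Lambda$ as required.
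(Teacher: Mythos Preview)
Your proof is correct and follows the same skeleton as the paper's Appendix: the central symplectic Lie--derivative identity (which the paper writes as the combination of~\eqref{lieOmega} and~\eqref{cancellation}), then a block-by-block treatment of $\Omega_{PP}$ --- cohomological equations plus small divisors for $\OmegaLL,\OmegaLW,\OmegaWW$, direct algebra with the definitions of $A,B,C$ for the $N$-blocks --- and finally a column-by-column check of $\Pinv\Xscr_P$ using $\Xscr_L=0$, $\Xscr_\W=\W\Gammaab$, and the constancy of $\OmegaLN,\Omega_{WN}$.

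The one place you depart from the paper is the argument for $\alpha=0$. The paper, working in the approximate setting, writes out the averaged $k=0$ equation for $\OmegaWW$ componentwise, uses antisymmetry and the block structure to isolate the diagonal relation $2\alpha_i b_i=\Oscr(\hat E)$, and concludes from non-degeneracy ($b_i\neq0$). You instead observe that $\Gammaab^{\ttop}\OmegaWW+\OmegaWW\,\Gammaab=0$ means $\Gammaab$ is $\OmegaWW$-Hamiltonian, so its spectrum $\{\alpha_j\pm\ii\beta_j\}$ must be invariant under negation, and the hypothesis that the $|\beta_j|$ are nonzero and pairwise distinct forces each eigenvalue to pair with its own conjugate, giving $\alpha_j=0$. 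This is cleaner and coordinate-free; on the other hand the paper's componentwise route has the advantage of surviving verbatim in the approximate case (yielding $\alpha=\Oscr(E_K,E_W)$), whereas your spectral pairing is an exact-invariance argument that does not obviously quantify.
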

The proof of this proposition  is a particular case of the more general ones 
appearing in the Appendix~\ref{app}, 
where both errors there are considered to be small but possibly non-zero.

Notice that the symplecticity of $P$ provides a simple a way to compute its inverse.
Moreover, when the torus is \emph{almost} invariant ($E_K, E_W$ are small) then:
$\alpha$ is small, 
$P$ is \emph{almost} symplectic and its inverse is approximately given by \eqref{sym},
and 
the linear dynamics are approximately conjugated to \eqref{eq:reducibility}, 
all of these up to $\Oscr(E_K, E_W)$. 
See Lemma~\ref{lemma:Psym} and Corollary~\ref{corollary:Pinverse} in the Appendix~\ref{app} for more details.

\begin{remark}
By a suitably scaling $W$ one can make the antisymmetric matrix $\OmegaWW$ that 
appears in Equation~\eqref{sym} equal to $\Omega_{2(n-d)}$.
\end{remark}

\begin{remark} The numerical computation of the term $\lie{\omega}N$ that appears in the definition of the torsion $T$ in Formula~\eqref{eq:torsion} can be rather demanding. This definition can be replaced by another one that does not involve the Lie derivative, as it has been done in~\cite{Figueras-Haro2023}. 
\end{remark}

\section{Newton step}\label{sec:step}

Given (approximated solutions) $(\K,\W,\lambda, \alpha)$ satisfying 
 Equations~\eqref{eq:invariance-appr}
with  $\EK: \toro^d \rightarrow \reali^{2n}$ and $\EW: \toro^d \rightarrow \reali^{2n\times 2(n-d)}$ 
small, we want to obtain better approximations $(\Knew,\Wnew,\lambdanew,\alpha_{new})$. For doing so, we
design
one step of the Newton method to look for corrections 
$\Delta\K:\TT^d\rightarrow \RR^{2n}$, $\Delta \W:\TT^d\rightarrow\RR^{2n\times2(n-d)}$, 
$\Delta \lambda,\Delta \alpha\in\RR^{n-d}$
such that 
\begin{align*}
\Knew & := \K+ \Delta \K,\\ \Wnew & := \W+ \Delta \W, \\ \lambdanew  & := \lambda+ \Delta \lambda ,\\ \alpha_{new} & := \alpha+ \Delta \alpha
\end{align*}
satisfy Equations \eqref{eq:invariance-appr} with errors of size $\Oscr(E_K, E_W)^2$.

\noindent
Some important facts are:
\begin{enumerate}[i)]
\item the dummy parameter $\alpha$ will be of order of the errors;
\item the corrections of the new solutions ($\Delta\K, \Delta \W, \Delta \lambda, \Delta \alpha$) 
will be of the order of the errors;
\item the invariance equation for $K$ depends only on $(\K,\lambda)$;
\item under Diophantine-Melnikov conditions on $(\omega, \beta)$, given any fixed $\lambda$ the invariance equation for $K$ can always be solved;
\item under Diophantine-Melnikov conditions on $(\omega, \beta)$ the linear dynamics equation for $W$ admits a solution for fixed values of the vectors $\lambda$ and $\alpha$.
\end{enumerate}

\begin{remark}
All these previous statements are without proofs but they will appear in a 
forthcoming paper \cite{CaraccioloFiguerasHaroTheory}. Here our focus is on the algorithm. 
\end{remark}
One starts with a set of equations for all the unknowns and the first goal is to adjust 
the parameters $\Delta\lambda$ and $\Delta\alpha$.
Once these parameters have been 
carefully chosen (see the detailed discussion in Subsection~\ref{subsection: choice lambda alpha}), we get
with a (solvable) set of cohomological equations of the type:
\begin{equation}
\label{alleq}
\begin{cases}
\Lie{\omega}{u^L} +T u^N &= v^L\\
\Lie{\omega}{u^N} &= v^N\\
\Lie{\omega}{u^W} +\Gammaab u^W &= v^W \\ 
\\
\Lie{\omega}{U^L} +T U^N - U^L \Gammaab  &= V^L \\
\Lie{\omega}{U^N} - U^N \Gammaab &= V^N\\
\Lie{\omega}{U^W}+\Gammaab U^W - U^W \Gammaab &= V^W
\end{cases}
\end{equation}
where $T$ is defined in \eqref{eq:torsion},
$v, V$ are given (there $\Delta\lambda$ and $\Delta\alpha$ have been 
absorbed) and $u, U$ are the unknowns. 
$u,v:\TT^d\rightarrow \reali^{2n}$ and $(\cdot^L, \cdot^N, \cdot^W)$ denote their components 
that are $d,d$ and $2(n-d)$ dimensional; while $U,V:\TT^d\rightarrow \reali^{2n\times 2(n-d)}$ 
decompose in $(\cdot^L, \cdot^N, \cdot^W)$ with $d,d$ and $2(n-d)$ rows, respectively.
The first two equations are the same that appear in standard KAM statements for
Lagrangian tori (see for example \cite{Figueras-Haro2023}) 
and they require a Diophantine frequency vector and the 
invertibility of the $d$-dimensional square matrix $\langle T\rangle$ 
(also known as non-degeneracy or twist conditions). The third, fourth and 
fifth equations in Equation~\eqref{alleq}, where also the matrix 
$\Gammaab$ appears, require first Melnikov condition on the frequency 
vectors $(\omega,\beta)$ in order to be solved. Finally, the 
last cohomological equation is the one that requires the second Melnikov non-resonance condition.

\begin{remark}
In Appendix \ref{app} we prove that the parameter $\alpha$ is of the size of the invariance errors and, 
so, all $\Gammaab$ in Equation \eqref{alleq} can be substituted by $\GammaOb$. From now on, we do so.
\end{remark}

In the following subsections, we describe in detail how we encounter these 
equations and which are the known terms $v,V$ appearing there.

\subsection{Correction of the approximate solution of the invariance equation for $K$}
\label{sec:K}
We look for ($\Knew$, $\lambdanew$) that 
satisfies the equation
\begin{equation*}
\Lie{\omega}{(\K+\Delta \K)}+\Xh\comp(\K+\Delta \K;\lambda+ \Delta \lambda)=0.
\end{equation*}
By Taylor expanding and discarding supralinear terms we get the following linear equation
\begin{equation}
\label{eq:stepK}
\Lie{\omega}\Delta \K + \Dz \Xh\comp(\K; \lambda)\Delta \K + \Dlambda \Xh\comp(\K; \lambda)\Delta \lambda  = - \EK.
\end{equation}
In order to solve Equation \eqref{eq:stepK} more easily 
we use the almost symplectic change of coordinates $P$ defined in \eqref{eq:P}
to write the unknown $\Delta\K$ as
\begin{equation}
\label{eq:deltak}
\Delta \K(\theta) = P(\theta) \xi_{\K}(\theta),
\end{equation}
with $\xi_{\K} : \toro^d \rightarrow \reali^{2n}$. By substituting \eqref{eq:deltak} 
in Equation~\eqref{eq:stepK} we get 
\begin{equation*}
\Lie{\omega} P \xi_{\K} + P \Lie{\omega}\xi_{\K} + \Dz \Xh\comp(\K;\lambda) P \xi_{\K} + \Dlambda \Xh\comp(\K;\lambda) \Delta \lambda  = - \EK\,,
\end{equation*}
that can be rearranged as
\begin{equation}
\label{eq:eqK with errors}
\begin{matrix}
\Lie{\omega}\xi_{\K} + P^{-1}(\Lie{\omega} P + \Dz \Xh\comp(\K;\lambda) P) \xi_{\K} + P^{-1}\Dlambda \Xh\comp(\K;\lambda)\Delta \lambda +\Oscr(E_K, E_W)^2 = \\
\Lie{\omega}\xi_{\K} + \Lambda\xi_{\K} + \Pinv\Dlambda \Xh\comp (\K;\lambda)\Delta \lambda +\Oscr(E_K, E_W)^2 = \\
- \Pinv\EK.
\end{matrix}
\end{equation}

\begin{remark}
The $\Oscr(E_K, E_W)^2$ term in Equation \eqref{eq:eqK with errors} comes 
from both the errors in the reducibility and the invertibility of $P$ times 
the norm of $\xi_{\K}$, that is $\Oscr(E_K, E_W)$.
\end{remark}

Discarding all supralinear terms and using Property \eqref{eq:reducibility with error}
the equation \emph{reduces} to 
\begin{equation}
\label{eq:inv-reduced}
\Lie{\omega}{\xi_{\K}} + \Lambda \xi_{\K}+ b \cdot \Delta \lambda = \etaK,
\end{equation}
where 
\begin{align*}
%\label{eq:etak}
\etaK (\theta)=& \ - \Pinv(\theta)\EK(\theta)\in \reali^{2n}, \\
%\label{eq:b}
b(\theta) =&\ \Pinv(\theta)\Dlambda \Xh\comp(\K(\theta);\lambda)\in \reali^{2n \times (n-d)}.
\end{align*}

\subsubsection{Resolution of the invariance equation for $K$}
We split the solution $\xi_{\K}$ of Equation~\eqref{eq:inv-reduced} in two components:
\begin{equation}
\label{eq:xiK}
\xi_{\K}(\theta) = \xi_{\etaK}(\theta) - \xi_b(\theta) \Delta \lambda\,,
\end{equation}
where $\xi_{\etaK}$ and $\xi_b$ satisfy 
$\Lie{\omega}\xi_{\etaK} + \Lambda \xi_{\etaK} = \etaK $ and 
$\Lie{\omega}\xi_b + \Lambda \xi_b = b $. 
Moreover, by expressing the vectors $\xi_{\etaK}, \xi_b$ in components 
$(\cdot^L,\cdot^N, \cdot^W)$ we obtain that both triples satisfy the equations
\begin{equation}
\label{eq:invariance-split}
\begin{cases}
\Lie{\omega}{\xiL} +T \xiN  &= \eta^L_{\K}, \ b^L \\
\Lie{\omega}{\xiN} &= \eta^N_{\K},\ b^N \\
\Lie{\omega}{\xiW} +\GammaOb \xiW &= \eta^W_{\K} ,\ b^W 
\end{cases}.
\end{equation}
The first couple of cohomological equations are the same as in the Lagrangian case (see \cite{Figueras-Haro2023}). In particular, the equations with the ${\cdot}^N$ component can be solved provided that $\eta^N_{\K}$ and $b^N$ have zero average. This is the case because:
\begin{itemize}
\item 
By definition we have $\eta^N_{\K} =L^\top  \Omega \comp K \EK$.
Therefore,
\begin{align*}
\eta^N_{\K} &= L^\top  \Omega\comp K(\Lie{\omega}{\K}+ \Xh\comp(\K;\lambda))\\& = L^\top  \Omega\comp K(-\DK \cdot \omega + (\Omega\comp K)^{-1} (\Dz h\comp(\K;\lambda))^\top )
\\ &= -\OmegaLL \cdot \omega +\Dif_\theta (h\comp(\K;\lambda))^\top. 
\end{align*}
Finally, $\OmegaLL$ has zero average since the 2-form is exact, see e.g. the proof in Lemma 4.3 of~\cite{Haro-Luque-2019}), while the second term has zero average since it is a derivative of periodic functions.

\item Similarly for $b^N$, by using its definition and expressing it as a derivative of periodic functions:
\begin{align*}
b^N &= -L^\top  \Omega\comp K \ \Dlambda \Xh\comp(\K;\lambda)\\ & = -\DK^\top  \Omega\comp K \Dlambda( (\Omega\comp K)^{-1} \ (\Dz h\comp(\K;\lambda))^\top ) \\ &= -\DK^\top  \Dlambda (\Dz h\comp(\K;\lambda)  )^\top  = -\Dlambda(\Dif _\theta (h \comp(\K;\lambda)))^\top .
\end{align*}
\end{itemize}

We start by solving the $\cdot^N$ equations in \eqref{eq:invariance-split} 
(up to the average). Since it is of the form $
\Lie{\omega}{u} = v$ 
its solutions are of the form 
$$
u(\theta) = \langle u\rangle+ \sum_{k\in\interi^d\setminus\{0\}} \frac{\iu \hat v_k}{k \cdot \omega}e^{\iu  k \cdot \theta}.
$$
Then, solving the $\cdot^L$ equations is the same once the average of the $\cdot^N$ terms is fixed. This 
is done by making the cohomological equations solvable, that is, with zero average. This is accomplished 
by selecting 
\[
\begin{matrix}
\langle \xi^N_{\eta_{\K}}\rangle = \langle T \rangle ^{-1} (\langle \eta^L_{\K}- T\xi^N\rangle)\\
\langle \xi^N_{b}\rangle = \langle T \rangle ^{-1} (\langle b^L- T\xi^N\rangle)
\end{matrix}.
\]
Here, we use the non-degeneracy of $T$ (the invertibility of its average).
\begin{remark}
We notice that fixing the average of $\xi^L$ is unimportant (plays no role except a shift on the parameterization). Hence, we fix it to be equal to zero.
\end{remark}

The $\cdot^W$ equations are of the form
\begin{equation*}
\label{eq:melnikov1eq}
\Lie{\omega}{u} +\GammaOb u = v\,. 
\end{equation*}
By denoting $u = (u^1, \ldots, u^{2(n-d)}) $ this equation in terms of its Fourier coefficients is expressed as follows
\begin{equation}
\label{eq:xiK_W}
\begin{pmatrix}
- \iu  k \cdot \omega & -\beta_j \\
\beta_j & - \iu  k \cdot \omega
\end{pmatrix}
\begin{pmatrix}
\hat u_k^{j} \\ \hat u_k^{j+n-d} 
\end{pmatrix}
= \begin{pmatrix}
\hat v_k^j \\  \hat v_{k}^{j+n-d}
\end{pmatrix} \quad {\rm with}\quad  j = 1, \ldots, n-d.
\end{equation}
Notice that solutions exist as long as there are no resonances of the form $k \cdot \omega\neq \pm \beta_j$. 
Again, by imposing further the first Melnikov condition (see Definition~\ref{diof+mel}), we obtain analytic solutions.

Finally, according to Equation~\eqref{eq:xiK}, the new parametrization is
\begin{equation}
\label{eq:K-NEW}
\Knew = \K + \Delta \K = \K + P(\xi_{\etaK}-\xi_b \Delta\lambda).
\end{equation}
The term $\Delta\lambda$ will be determined in the next step, when we solve the invariance equation for $W$.

\subsection{Correction of the approximate solution for the invariance equation for $W$}
As before, we put in $(\Knew,\Wnew, \lambdanew,\alpha_{new})$ in the invariance equation for $W$ 
in \eqref{eq:invariance-appr} obtaining
\begin{equation*}
\label{eq:W}
\Lie{ \omega}{(\W + \Delta \W)}+ \Dz \Xh\comp(\K+ \Delta \K ; \lambda +\Delta \lambda) (\W+\Delta \W) - (\W+\Delta \W) \Gamma_{\alpha+\Delta \alpha,\beta} = 0.
\end{equation*}
By Taylor expanding and discarding supralinear terms (recall that also $\Gammaab$ is then replaced 
by $\GammaOb$) we get
\begin{align}
\label{eq:stepW}
\Lie{\omega}{\Delta \W}+\Dz\Xh\comp (K;\lambda) \Delta \W +\Dlambdaz \Xh\comp(K;\lambda)[\Delta \lambda, \W]\nonumber\\ + \Dzz \Xh\comp (K;\lambda)[\Delta \K, \W] -\Delta \W\GammaOb -W\Gamma_{\Delta \alpha,0} = -\EW 
\end{align}
where 
\begin{equation*}
\Dlambdaz  \Xh\comp (K;\lambda)[\cdot, \W] : \Delta \lambda \rightarrow \Delta \lambda^\top  \Dlambdaz  \Xh\comp (K;\lambda)\W ,
\end{equation*}
being $\Dlambdaz \Xh \comp (K;\lambda)$ a collection of $n-d$ squared matrices of dimension $2n$ with\footnote{$(\Xh)_j$ denotes the $j$--th component of the vector field.} 
$$
(\Dif_{\lambda_k z } \Xh\comp (K;\lambda))_{i,j} = \partial_{\lambda_k} \partial_{z_i} (\Xh\comp (K;\lambda))_j , \ \forall \ i,j = 1, \ldots, 2n, \ k=1,\ldots,n-d,
$$
and
\begin{equation*}
\Dzz \Xh\comp (K;\lambda)[\cdot, \W] : \Delta \K \rightarrow \Delta \K^\top  \Dzz \Xh \comp (K;\lambda)\W 
\end{equation*}
being $\Dif_{z z } \Xh\comp (K;\lambda) $ a collection of $2n$ squared matrices of dimension $2n$ with 
$$
(\Dif_{z_k z } \Xh\comp (K;\lambda))_{i,j} = \partial_{z_k} \partial_{z_i} (\Xh\comp (K;\lambda))_j \ \ \forall\ i,j,k = 1, \ldots, 2n.
$$

By applying the almost symplectic change of coordinates $P$,
\begin{equation*}
\Delta \W (\theta)= P(\theta) \xi_{\W}(\theta)
\end{equation*} 
with $\xi_{W}:\TT^d\rightarrow \reali^{2n \times 2(n-d)}$, and removing supralinear terms in \eqref{eq:stepW}
we obtain
\begin{equation}
\label{eq:W-almost-reduced}
\Lie{\omega}{\xi_{\W}}+\Lambda \xi_{\W} -\xi_{\W}\GammaOb - P^{-1} W 
\Gamma_{\Delta \alpha,0}+ \Bscr(\Delta \lambda)+ \Cscr (\Delta \K) = \etaW,
\end{equation}
where $\etaW,\Bscr(\Delta \lambda),\Cscr(\Delta \K):\TT^d\rightarrow \reali^{2n\times 2(n-d)}$ are defined as
\begin{align*}
%\label{etaW}
\etaW(\theta) & = -\Pinv(\theta)\EW(\theta), \\ 
%\label{Bscr} 
\Bscr(\Delta \lambda)(\theta) &= \Pinv(\theta)\Dlambdaz \Xh(K(\theta);\lambda)[\Delta \lambda, \W(\theta)], \\ 
%\label{Cscr}
 \Cscr(\Delta \K)(\theta) &= \Pinv(\theta)\Dzz \Xh (K(\theta);\lambda)[\Delta \K(\theta), \W(\theta)].
\end{align*}

Since $\Delta \K = P \xi_{\etaK} - P \xi_b \Delta \lambda$ with the unknown $\Delta \lambda$, 
we split the contribution of $\Cscr(\Delta \K)$ as follows:

\begin{enumerate}
\item  the known terms, i.e. $\Cscr(P \xi_{\etaK})$, are moved to the r.h.s. of Equation \eqref{eq:W-almost-reduced}, thus replacing $\etaW$ by 
\begin{equation*}
\label{eq:etaWhat}
\hat{\eta}_W = \etaW- \Cscr(P\xi_{\etaK});
\end{equation*}

\item the terms depending on $\Delta \lambda$, i.e. $-\Cscr(P \xi_b \Delta \lambda)$, are collected in 
\begin{equation*}
\label{eq:Bhat}
\hat \Bscr(\Delta \lambda) = \Bscr(\Delta \lambda)-\Cscr(P\xi_b \Delta \lambda).
\end{equation*}
\end{enumerate}
Therefore, we obtain
\begin{equation*}
\label{eq:W-reduced}
\Lie{\omega}{\xi_{\W}}+\Lambda \xi_{\W} -\xi_{\W}\GammaOb - P^{-1} W \Gamma_{\Delta \alpha,0}+ \hat \Bscr(\Delta \lambda) = \hat{\eta}_\W,
\end{equation*}
which splits into the set of equations
\begin{equation}
\label{eq:W-split}
\begin{cases}
\Lie{\omega}{\xiWL} +T \xiWN - \xiWL \GammaOb &= \hatetaWL - \hat \Bscr^L (\Delta \lambda)\\
\Lie{\omega}{\xiWN} - \xiWN \GammaOb &= \hatetaWN - \hat \Bscr^N (\Delta \lambda)\\
\Lie{\omega}{\xiWW}+\GammaOb \xiWW - \xiWW \GammaOb &= \hatetaWW - \hat \Bscr^W (\Delta \lambda)+\Gamma_{\Delta \alpha,0} 
\end{cases}\,.
\end{equation}

We start by solving the third equation in \eqref{eq:W-split}, that is of the form 
\begin{equation}
\label{eq:op2}
\Lie{\omega}{u}+\GammaOb u - u \GammaOb = v,
\end{equation}
with $u:\TT^d\rightarrow \reali^{2(n-d)\times 2(n-d)}$.
In terms of Fourier coefficient and components ($\hat u_k^{i,j}$ denotes the $k$-th Fourier coefficient of the element $i,j$ of $u$) this equation is equivalent to
\begin{equation}
\label{eq:xiW_W}
\begin{pmatrix}
- \iu  k\cdot \omega   & -\beta_j & -\beta_i & 0 \\
\beta_j & - \iu  k\cdot \omega  & 0 & -\beta_i  \\
\beta_i & 0 &  - \iu  k\cdot \omega   & -\beta_j  \\
0 & \beta_i & \beta_j & - \iu  k\cdot \omega  
\end{pmatrix}
\cdot
\begin{pmatrix}
\hat u_k^{i,j} \\
\hat u_k^{i,j+n-d} \\
\hat u_k^{i+n-d,j} \\
\hat u_k^{i+n-d,j+n-d} \\
\end{pmatrix} 
=
\begin{pmatrix}
\hat v_k^{i,j}\\
\hat  v_k^{i,j+n-d} \\
\hat  v_k^{i+n-d,j} \\
\hat  v_k^{i+n-d,j+n-d}\\
\end{pmatrix} \,,
\end{equation}
with $i,j = 0, \ldots, n-d$.
When $i \neq j$, and if $\beta_i \neq \pm\beta_j$, the equation admits a solution provided that the determinant of the matrix is not zero, i.e., assuming again a non-resonance condition $k\cdot \omega \neq \pm(\beta_i\pm \beta_j)$, in this case the second Melnikov.
Therefore, we have to discuss the existence of a solution to the third equation in \eqref{eq:W-split} 
only in the case $i=j$ and $k = 0$. This means that we have to consider the averaged terms in the diagonal of the four $n-d$ dimensional blocks. 
In matrix form, we have the following equation
\begin{equation}
\label{eq:xiW_Wk0}
\begin{pmatrix}
0   & -\beta_i & -\beta_i & 0 \\
\beta_i & 0  & 0 & -\beta_i  \\
\beta_i & 0 & 0 & -\beta_i  \\
0 & \beta_i & \beta_i & 0 
\end{pmatrix}
\cdot
\begin{pmatrix}
\hat u_0^{i,i} \\
\hat u_0^{i,i+n-d} \\
\hat u_0^{i+n-d,i} \\
\hat u_0^{i+n-d,i+n-d} \\
\end{pmatrix} 
=
\begin{pmatrix}
\hat v_0^{i,i}\\
\hat  v_0^{i,i+n-d} \\
\hat  v_0^{i+n-d,i} \\
\hat  v_0^{i+n-d,i+n-d}\\
\end{pmatrix} \,,
\end{equation}
that has a solution if and only if $\hat v_0^{i,i} = -\hat v_0^{i+n-d,i+n-d}$ and $\hat v_0^{i,i+n-d} = \hat v_0^{i+n-d,i}$, i.e., $\langle v_0\rangle$ is of the form 
\begin{equation}
\label{image}
\begin{pmatrix}
\hat v_0^{i,i}  & \hat v_0^{i,i+n-d} \\
\hat v_0^{i,i+n-d} &- \hat v_0^{i,i}
\end{pmatrix}.
\end{equation}
In our equation $\hat v_0 = \langle \hatetaWW - \hat \Bscr^W (\Delta \lambda)+\Gamma_{\Delta \alpha,0}\rangle$. In the next subsection, we discuss how to choose $\Delta \lambda$ and $\Delta \alpha$ such that $\hatetaWW - \hat \Bscr^W (\Delta \lambda)+\Gamma_{\Delta \alpha,0}  \ \in \Im(\Lscr^2_{\omega,\GammaOb})$, that is, how to make the equation solvable. Clearly, in view of the degenerate nature of the matrix, the solution is not unique. Indeed, every matrix of type 
\begin{equation*}
\label{kernel}
\begin{pmatrix}
\hat u_0^{i,i} & \hat u_0^{i,i+n-d}\\
-\hat u_0^{i,i+n-d} & \hat u_0^{i,i}
\end{pmatrix}
\end{equation*}
has zero image and therefore the solution will depend on $2$ parameters ($\hat u_0^{i,i}$ and $\hat u_0^{i,i+n-d}$) for each $i=1,\ldots,n-d$. 

Once we solve for $\Delta\lambda$ and $\Delta\alpha$ we can also solve the first and second equations. 

\subsubsection{Choice of $\Delta \lambda$ and $\Delta \alpha$}
\label{subsection: choice lambda alpha}
The values of $\Delta\lambda$ and $\Delta\alpha$ are determined while solving the third equation in Formula~\eqref{eq:W-split}. As mentioned before, we must concentrate on the $i=j$ and $k=0$ terms of the family of linear systems in \eqref{eq:xiW_W}, i.e., Equation~\eqref{eq:xiW_Wk0}. In particular, this is equivalent to study the equation 
\begin{equation}
\label{eq:lambdaalpha}
\Pscr (\GammaOb \langle\xiWW \rangle- \langle\xiWW \rangle \GammaOb + \langle \hat \Bscr^W\rangle (\Delta \lambda)-\Gamma_{\Delta \alpha,0}) = \Pscr(\langle\hatetaWW \rangle)
\end{equation}
where $\Pscr$ is defined in Section \ref{sec:general}.
Notice that $\Pscr(\label{eq:stcond}\GammaOb \langle\xiWW \rangle- \langle\xiWW \rangle \GammaOb)$ is
$$
\begin{pmatrix}
\diag(s) & \diag(t)\\
\diag(t) &-\diag(s)
\end{pmatrix}
$$
being 
\begin{equation*}
\label{diags}
\diag(s) = -\diag(\beta) \diag(\langle \xiWW \rangle_{21})-\diag(\langle \xiWW \rangle_{12})\diag(\beta)
\end{equation*}
and 
\begin{equation*}
\label{diagt}
\diag(t)=-\diag(\beta) \diag(\langle \xiWW \rangle_{22})+\diag(\langle \xiWW \rangle_{11})\diag(\beta),
\end{equation*}
where
\begin{equation*}
\langle\xiWW\rangle = 
\begin{pmatrix}
\langle\xiWW\rangle_{11} & \langle\xiWW\rangle_{12} \\ 
\langle\xiWW\rangle_{21} & \langle\xiWW\rangle_{22} \\ 
\end{pmatrix}.
\end{equation*}

Equation
\eqref{eq:lambdaalpha} rewrites as the system of equations for the unknowns $(s,t, \Delta \alpha, \Delta \lambda)$:
\begin{equation*}
\label{eq:diag terms}
\begin{pmatrix}
I_{n-d} & O_{n-d} & -I_{n-d} & \langle\hat\Bscr^W\rangle_{11}  \\
O_{n-d} & I_{n-d} & O_{n-d} &\langle \hat\Bscr^W \rangle_{12} \\
O_{n-d} & I_{n-d} & O_{n-d} & \langle\hat\Bscr^W\rangle_{21} \\
-I_{n-d} & O_{n-d} & -I_{n-d} & \langle\hat\Bscr^W\rangle_{22} 
\end{pmatrix} \begin{pmatrix}
s\\t\\ \Delta \alpha \\ \Delta \lambda
\end{pmatrix} = \begin{pmatrix}
\dscr(\langle\hat{\eta}_{W}^W\rangle_{11}) \\
\dscr(\langle\hat{\eta}_{W}^W\rangle_{12}) \\
\dscr(\langle\hat{\eta}_{W}^W\rangle_{21}) \\
\dscr(\langle\hat{\eta}_{W}^W\rangle_{22}) 
\end{pmatrix}.
\end{equation*}
Notice that it admits a solution under the \emph{trasversality condition:} 
$$\langle\hat\Bscr^W\rangle_{12}-\langle \hat\Bscr^W\rangle_{21}$$ 
is invertible. Looking at the definition of the matrix $\hat{\Bscr}$, this 
is a condition on the family of Hamiltonians. 
Under this assumption its solution is
\begin{align*}
\label{eq:solutions alpha lambda}
\begin{split}
s &= \frac{\dscr(\langle\hat{\eta}_{W11}^W\rangle- \langle\hat{\eta}_{W22}^W\rangle)}{2} -\frac{(\langle\hat\Bscr^W_{11}\rangle -\langle\hat\Bscr^W_{22}\rangle)}{2}\Delta \lambda \,,\\
t &= \frac{\dscr(\langle\hat{\eta}_{W12}^W\rangle+ \langle\hat{\eta}_{W21}^W\rangle)}{2} -\frac{(\langle\hat\Bscr^W_{12}\rangle +\langle\hat\Bscr^W_{21}\rangle)}{2}\Delta \lambda \,,\\
\Delta \alpha &= -\frac{\dscr(\langle\hat{\eta}_{W11}^W\rangle+ \langle\hat{\eta}_{W22}^W\rangle)}{2} +\frac{(\langle\hat\Bscr^W_{11}\rangle +\langle\hat\Bscr^W_{22}\rangle)}{2}\Delta \lambda\,, \\
\Delta \lambda &= (\langle\hat\Bscr^W_{12}\rangle -\langle\hat\Bscr^W_{21}\rangle)^{-1}\dscr(\langle\hat{\eta}_{W12}^W\rangle- \langle\hat{\eta}_{W21}^W\rangle)\,.\\
\end{split}
\end{align*}
Furthermore, we emphasize again that once we have a solution $(s,t,\Delta\alpha, \Delta\lambda)$ there 
is a $2(n-d)$-dimensional family of solutions for $\langle \xiWW\rangle$ that satisfies~\eqref{eq:stcond}: given a solution $\begin{pmatrix}
\langle \xiWW\rangle_{11}& \langle \xiWW\rangle_{12}\\
\langle \xiWW\rangle_{21} & \langle \xiWW\rangle_{22}\\
\end{pmatrix}$ of the Equations \eqref{diags} and~\eqref{diagt}, we obtain another solution by adding 
$\begin{pmatrix}
\diag(v_1) & \diag(v_2)\\-\diag(v_2) &\diag(v_1)
\end{pmatrix}$, with $v_1, v_2\in\mathbb R^{n-d}$.
The existence of these extra parameters $v_1, v_2$ reflects the fact that the operator defined in 
\eqref{eq:op2} has a kernel and the solution for $W$ is symmetric for rotations. 
We, then, fix these two vectors to be equal to zero.

\begin{remark}
By adding the extra condition of fixing that the diagonal and antidiagonal terms of $\Omega_{WW}$ are equal to the ones in $\Omega_{n-d}$, we could fix the vector $v_1$. 
\end{remark}

\begin{remark} The dummy correction $\Delta \alpha$ has the role to control the diagonal terms of the r.h.s. of the third equation in Equation~\eqref{eq:W-split} and make them in the form of the image~\eqref{image}. However, if the term $\langle \hat \eta_W^W - \hat\Bscr^W(\Delta\lambda)\rangle$ is antisymmetric (e.g., when the symplectic matrix $\Omega$ is constant) the $\Delta\alpha$ correction is not needed.

The $\Delta \lambda$ correction acts on the antidiagonal terms. Without considering this extra-correction, the linear transverse motion would still be elliptic but with frequency vector slightly different from $\beta$. 
\end{remark}

%\subsection{Resolution of $\Omega_{W_{new}W_{new}}-J=0$.}
%Once $\Delta \lambda$ and $\Delta \alpha$ are determined, the right hand side of System~\eqref{eq:W-split} is completely determined and we can solve it, provided that non-resonance conditions are satisfied. For the third equation, we have seen that the system is non-singular when $i\neq j$ or $k \neq 0$, and the solution is obtained via the formul{\ae} solving \eqref{eq:xiW_W}. Otherwise, in the case $i=j$ and $k=0$ what is left to determine the vectors $v_1$ and $v_2$. These are determined by imposing that 
%\begin{equation}
%\label{eq: omegaWW invariance}
%\Omega_{W_{new}W_{new}}-J=W_{new}^\top \Omega W_{new}-J=0.
%\end{equation}

%By defining $S =\begin{pmatrix}
%\Dscr_{v_1} & -\Dscr_{v_2} \\ \Dscr_{v_2} & \Dscr_{v_1}
%\end{pmatrix},$  writing $ \Wnew = W+L\xiWL + N\xiWN + W\xiWW + WS$,  plugging them in \eqref{eq: omegaWW invariance}, and discarding terms of order $\Oscr(E_K, E_W, E_\Omega)^2$ (recall that $\OmegaNW$, $\OmegaLW$ are $\Oscr(E_K, E_W)$),
%we get the following equation
%$$
%\W^\top \Omega \W + (\xiWW)^\top \W^\top \Omega \W + \W^\top \Omega \W\xiWW + S^\top \W^\top \Omega \W + \W^\top \Omega \W S = J.
%$$
% Moreover, by using $W^\top \Omega W = J+ E_\Omega$ and again discarding  terms of order $\Oscr(E_K, E_W, E_\Omega)^2$ we are left to solve
%$$
%\Pscr(S^\top  J + JS + (\xiWW)^\top  J+ J \xiWW + E_\Omega) = 0.
%$$
%All these matrices are antisymmetryc and we get only a condition for $s = -\frac{\hat{v}_0^{i,i+n-d}}{2\beta_i}+\frac{\bar E_{11}}{2}$, while $t$ is still free and may be simply put to zero. 

\subsubsection{Resolution of the first and second equation}
\label{subsubsection: 1st 2nd eq}
The first and second equations in \eqref{eq:W-split} are very similar to each other and are of the form 
$$
\Lie{\omega}{u} -u\GammaOb= v.
$$
In Fourier components it becomes
$$
\begin{pmatrix}
- \iu k \cdot \omega & -\beta_j \\
\beta_j & - \iu  k \cdot \omega
\end{pmatrix}
\begin{pmatrix}
\hat u_k^{i,j} \\ \hat u_k^{i,j+n-d}
\end{pmatrix}
= \begin{pmatrix}
\hat v_k^{i,j} \\  \hat v_k^{i,j+n-d}
\end{pmatrix},
$$
with $i=1,\ldots ,d$ and $j = 1, \ldots,n-d$.
Notice it is the same as Equation~\eqref{eq:xiK_W}, so solvable provided first Melnikov condition is satisfied. 

With this final resolution, we have determined all the corrections needed. 
We can use $\Delta \lambda$ to completely determine $\Knew$ as in~\eqref{eq:K-NEW} and $\Wnew = W + P \xi_{W}$.

\section{Application: coupled pendula}
\label{sec:appl}
We tested the  algorithm described in the previous section for computing lower dimensional tori in a system of $4$ degrees of freedom with parameters $\ell_j$ ($j=1, \ldots , 4$) and $k_j$($j= 1,\ldots 3$). Its Hamiltonian is 
\begin{equation}
\label{eq:ham-pendula}
h(y,x) = \sum_{j=1}^4\left(\frac{y_j^2}{2 \ell_j^2}-\ell_j \cos{x_j}\right) +\frac{k_1}{2}(\ell_2 x_2-\ell_1 x_1)^2 + \epsilon \sum_{j=1}^2 \frac{k_{j+1}}{2}(\ell_{j+2} x_{j+2}-\ell_{j+1} x_{j+1})^2.
\end{equation}
When $\varepsilon = 0$ the system consists of two uncoupled pendula plus
two pendula coupled together with a quadratic potential. 
In this case we choose our torus $K$ ($d=2$) to be the coupled pendula and 
the normal dynamics $W$ as the harmonic oscillators resulting in linearizing the uncoupled pendula around 
$(x_j, y_j)=0$ with frequencies $\beta_j = \dfrac1{\sqrt{\ell_{j+d}}}$, $j=1,2$.

In the Hamiltonian~\eqref{eq:ham-pendula} the dependence on a parameter $\lambda$ is not yet explicit. Our choice is the following: we take variable $\ell$ for the last two pendula, that is, we take 
$\ell_{j+2} = 1/(\beta_j+\lambda_j)^2$ for $j=1,2$. In particular, the parameter vector $\lambda$ at 
$\varepsilon=0$ is initially set to $0$.

\subsection{Initialization}
We are looking for $2$--dimensional tori whose transverse elliptic dynamics is given by small oscillations around the equilibrium positions of the other two pendula. 
Let us define the set of coordinates: we divide in different blocks the variables associated to the first $2$ pendula $(y_1, y_2, x_1, x_2)$ and the coordinates for the last $2$, $(y_3, y_4, x_3, x_4)$. 
Therefore, the symplectic matrix $\Omega$ is constant and it takes the form 
$ \Omega = \begin{pmatrix}
\Omega_2 & 0 \\ 0 & \Omega_2
\end{pmatrix}\,.
$

An initial parametrization of the torus $K$ with fixed frequency $\omega$, denoted with $K^0$, 
%(where again the index $0$ stands for ``step 0"), 
is given by 
$( K^0_1,\ K^0_2,\ K^0_3,\ K^0_4,\ 0,\ 0,\ 0,\ 0),
$
where $( K^0_1,\ K^0_2,\ K^0_3,\ K^0_4)$ is the parametrization of the torus coming from the
two coupled pendula. This torus is obtained as follows: First we set $k_1=0$ and get the torus
from the uncoupled system. Then, we perform a continuation with respect to $k_1$ by applying 
the techniques in \cite{Haro-Luque-2019}.

In our computations, we use Fourier expansions to represent our functions. 
We consider a sample of points on a regular grid of size $N_F = (N_{F_1}, N_{F_2})\in \mathbb{N}^2$, $\theta = (\theta_1, \theta_2) = (\frac{j_1}{N_{F_1}}, \frac{j_2}{N_{F_2}})$ and $j_1=0, \ldots, N_{F_1}$ and $j_2=0, \ldots, N_{F_2}$. In particular, (by trial and error) we set $N_{Fj} = 1024$. 

As initial approximation for $\W^0$, we first set
$$
\tilde{\W}^0 = \begin{pmatrix}
0 & 0 & 0 & 0 \\ 
0 & 0 & 0 & 0 \\ 
0 & 0 & 0 & 0 \\ 
0 & 0 & 0 & 0 \\ 
1 & 0 & -\sqrt{\ell_3^3} & 0 \\ 
0 & 1 & 0 & -\sqrt{\ell_4^3}  \\ 
1 & 0 & \frac{1}{\sqrt{\ell_3^3}} & 0 \\ 
0 & 1 & 0 & \frac{1}{\sqrt{\ell_4^3}}\\ 
\end{pmatrix},
$$
so as to satisfy the equation $\DXh(\K^0;\lambda^0)\tilde{\W}^0-\tilde{\W}^0\GammaOb = 0.$ However, 
in the procedure we are also interested in keeping fixed the symplectic 
form, that is we want to fix $\W^\top  \Omega\comp K \W = \OmegaWW  = \Omega_2$. 
This condition is not satisfied by $\tilde \W^0$ since 
$$
\tilde \W^{0T}\Omega\comp K\tilde \W^0  = \begin{pmatrix}
O_{n-d} & -\diag(a) \\ \diag(a) & O_{n-d}
\end{pmatrix},$$ 
where $a_j=\sqrt{\ell_{j+d}^3}+\dfrac{1}{\sqrt{\ell_{j+d}^3}}$. 
To fix the symplectic form $\OmegaWW $ we normalize by
$$
\W^0 = \tilde \W^0 \begin{pmatrix}
\diag(b) & O_{n-d} \\ O_{n-d} & \diag(b)
\end{pmatrix}
$$
where $b_j = 1/\sqrt{a_j}$ for $j=1,2$. 

\subsection{Results}
Here we report some numerics done in multiprecision using the library \textbf{mpfr}, see \cite{mpfrlib}, with 
60 digits of precision. 
We fix the following parameters:
$$
k_1 = 10^{-2},\ k_2 = k_3 = 1,\ \omega_1 = \sqrt{2}, \ \omega_2 = \sqrt{3},\ \beta_1 = \sqrt{2.5},\ \beta_2 = \sqrt{2.8}.
$$
Moreover, we fix $\ell_1$ and $\ell_2$ as $$
\ell_1 = 0.45678,\ \ell_2 =  0.325,
$$ 
while we define the last two as $\ell_3 = 1/\beta_1^2, \ell_4 = 1/\beta_2^2$.
The frequencies (and the $\ell$s) have been chosen so as to be non-resonant and so as to correspond to librations around the equilibrium positions (that is, we do not want to consider complete rotations of the pendula). Moreover, the $\ell$ of the last two pendula is such that the frequencies of the initial small oscillations are exactly $\beta_1$ and $\beta_2$. 

The efficiency of the procedure is tested by looking at the norms of the invariance errors $E_K$ and $E_W$, that are defined as follows:
$$
|E_K| = \max_{1 \le i \le 2n} |(E_{K})_i|\,
$$
and 
$$
|E_W| = \max_{1 \le i \le 2n} \sum_{1 \le j \le 2(n-d)}|(E_{W})_{i,j}|\,.
$$
In Table~\ref{table} we observe the positive correlation between the error of the invariance equations
and the magnitude of $\epsilon$: as it is expected, the larger $\varepsilon$ is the worse 
are the errors.
\begin{table}
\begin{center}
\begin{tabular}{c|c|c}
$\epsilon$ & $|\EK|$ & $|\EW|$  \\
\hline
$1\cdot 10^{-6}$  & $1.20531845 \cdot 10^{-41}$
& $1.01027895 \cdot 10^{-43}$\\
$1\cdot 10^{-5}$ & $1.09024078\cdot 10^{-41}$ & $1.00911089 \cdot 10^{-42}$\\
$1\cdot 10^{-4}$  & $1.01587889\cdot 10^{-41}$ & $9.93762843\cdot 10^{-42}$\\
$1\cdot 10^{-3}$ & $  8.68612723\cdot 10^{-40}$ & $1.16683524\cdot 10^{-36}$\\
\end{tabular}
\end{center}
\caption{Norms of the errors $\EK$ and $\EW$ for different values of the parameter $\epsilon$ and after $8$ iterations of the algorithm and $N_{F_1} = N_{F_2} = 1024$. }
\label{table}
\end{table}
In Figure~\ref{andamento}, we report in semi-logscale the rate of convergence of the errors $|\EW|$, $|\EK|$ and of the corrections $|\Delta \lambda|$ with respect to the number of steps. Also in this picture, one can observe how the rate of convergence becomes slower once we approach the numerical limit.
\begin{figure}[!]
\centering
\includegraphics[scale=0.9]{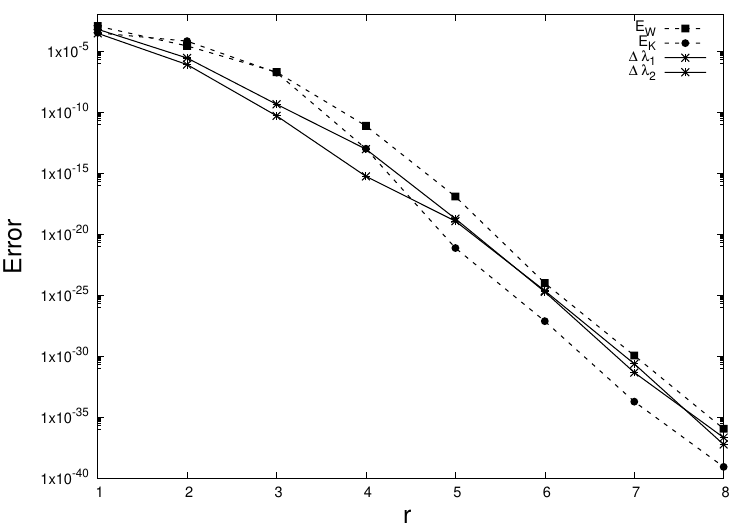}
\caption{Decrease, in semi-logscale, of the norms of the errors $\EK$ and $\EW$ and $|\Delta \lambda|$ when $\epsilon = 10^{-3}$, $r$ denotes the number of steps.}
\label{andamento}
\end{figure}
%In the reported result, we update first $\lambda$ and $\W$, $\K$ after, as it is faster as commented in subsection~\ref{sec:switch}. \C{Furthermore, at each step the parameter $\Delta \alpha = 0$, as a consequence of the fact that $\langle \hat \eta^WW\rangle$ and $\hat \Bscr^W$ are antisymmetric (keep in mind that this can be proved when $\Omega$ is constant.. Appendix?).}

In Figures~\ref{toro} and~\ref{lindin} we show the obtained torus after $8$ iterations 
of the algorithm with parameters $k_1 = 10^{-2}$ and $\epsilon = 10^{-3}$, where $\theta$ is sampled in a grid of $N_{F_1}\times N_{F_2}$ points. 
The computational time to compute $8$ steps for $N_{F_1}=N_{F_2} = 1024$ on a computer equipped with a processor {\tt Intel(R) Core(TM) i7-8550U CPU @ 1.80GHz} and $16 GB$ of RAM is around $10$h. 
The amount of time is strongly correlated to the high number of Fourier components needed to accurately represent the torus.
\begin{figure}[!]
\centering
\includegraphics[scale=0.35]{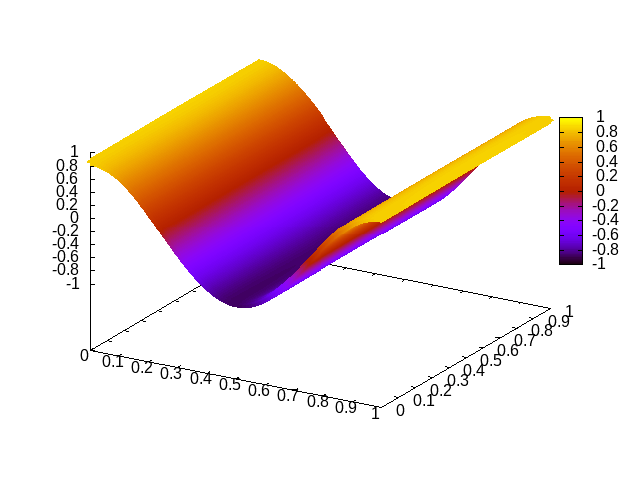}
\includegraphics[scale=0.35]{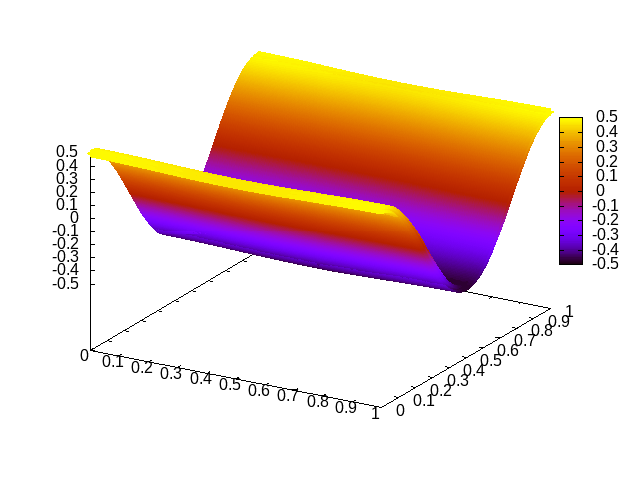}
\includegraphics[scale=0.35]{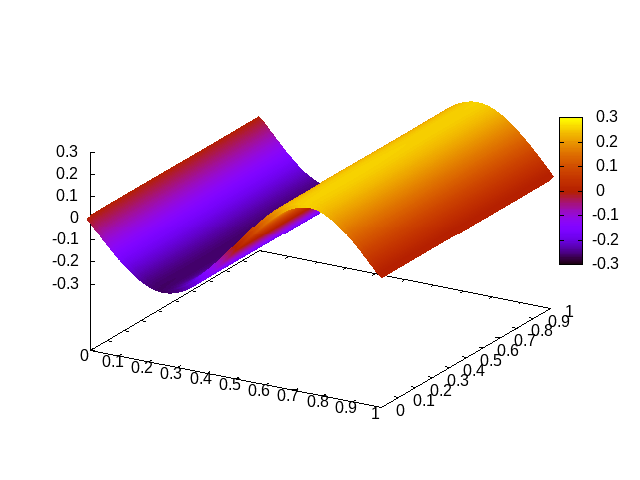}
\includegraphics[scale=0.35]{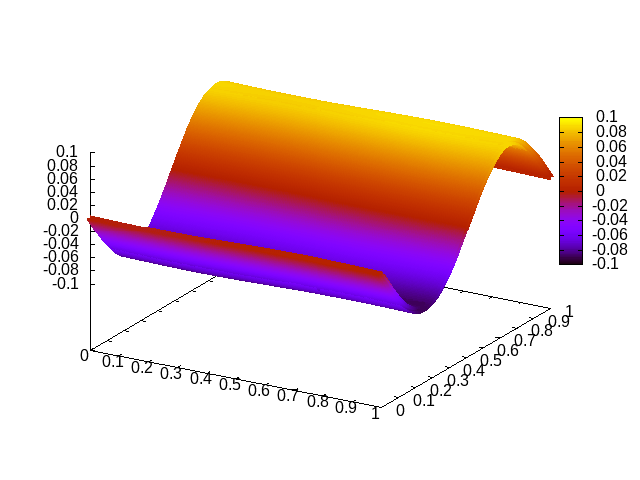}
\caption{Approximated invariant torus when $k_1 = 10^{-2}$ and $\epsilon = 10^{-3}$, after $8$ iteration of the algorithm. From left to right, up to down: 
$(\theta_1, \theta_2)\rightarrow K_j^8(\theta_1, \theta_2)$ for $j=1,\ldots,4$ (i.e., $y_1,\ y_2,\ x_1$ and $ x_2$).
 }
\label{toro}
\end{figure}

\begin{figure}[!]
\centering
\includegraphics[scale=0.35]{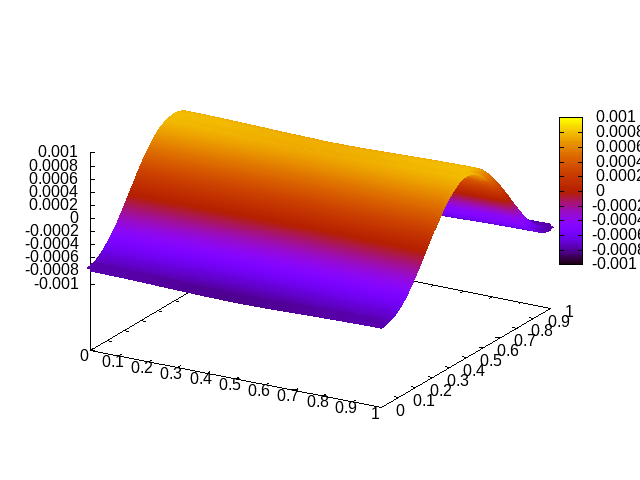}
\includegraphics[scale=0.35]{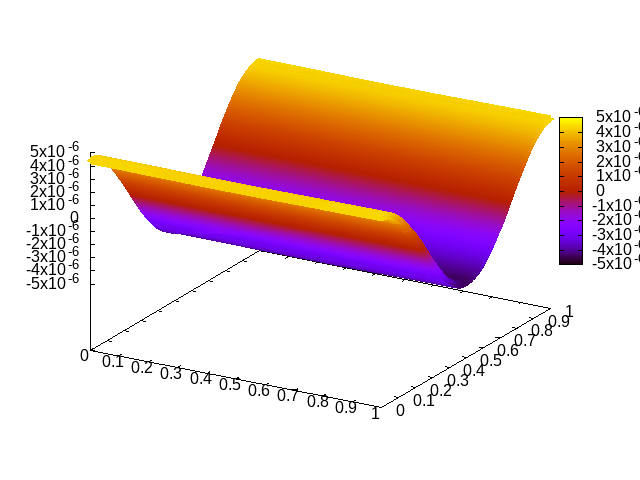}
\includegraphics[scale=0.35]{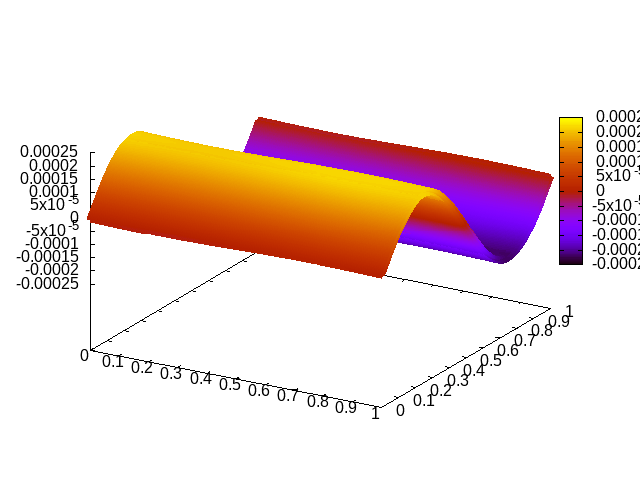}
\includegraphics[scale=0.35]{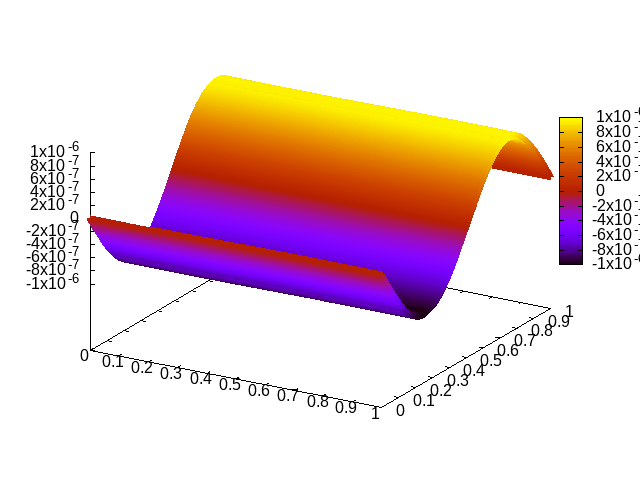}
\caption{Dynamics transverse to the torus when $k_1 = 10^{-2}$ and $\epsilon = 10^{-3}$, after $8$ iteration of the algorithm.  From left to right, up to down: 
$(\theta_1, \theta_2)\rightarrow K_j^8(\theta_1, \theta_2)$ for $j=5,\ldots,8$ (i.e., $y_3,\ y_4,\ x_3$ and $ x_4$). }
\label{lindin}
\end{figure}

\section{Conclusion and perspectives}
\label{sec:concl}

In this paper we have presented 
a method to efficiently compute lower dimensional invariant tori and their attached 
normal bundles; we tested it in a simple model, with the smallest (nontrivial) 
dimensions of the frequency vectors, that is $2$. 
The next natural step is to test its performance with more famous and challenging problems, 
such as the planetary three-body problem. Indeed, despite a numerical evidence of the existence of invariant tori, the existence of Lagrangian tori or lower dimensional tori is still an open problem, at least for real values of the masses (in the Solar System, the natural perturbing parameter is the ratio between the mass of the Sun and the one of Jupiter, i.e., $\epsilon \simeq 10^{-3}$). The latest results with real ephemerides are numerical, see \cite{FiguerasHaro3BPNum}, and for small values of the masses, eccentricities and mutual inclinations are in~\cite{Bia-chi-Val}.
For what concerns the real values of the parameters, at the present moment there are computer-assisted proofs for the secular model only, that is a model with reduced degrees of freedom, where the fast motion, i.e., the motion of revolution of the planets, is averaged. These results are based on normal forms (see~\cite{SJS} for a model of the Solar System,~\cite{upsand} for a model of an exoplanetary system with significant eccentricities and mutual inclination).

As an intermediate step before a computer-assisted proof, the performances of the algorithm can be tested also in comparison with purely numerical procedures. In fact, a fast decrease of the error in the implemented algorithm is a necessary requirement for a computer-assisted proof of the existence of invarian tori. In this context, the conversion of the semi-analytical algorithm to a computer-assisted proof is more straightforward in the parametrization method than in the normal form technique. In the latter, the algorithm and the formal proof of the convergence have to be extended with a scheme of estimates, that have to be iterated in queue of the algorithm. This is due also to the fact that the number of explicit steps which can be performed (of the order of $10^2$) is usually not yet enough to decrease the perturbation to the extreme low level of formal theorems. Even if in principle the same technique used for the KAM theory could be extended to the case of lower dimensional tori, the translation would not be straightforward. 
%One should estimate the impact of the changes of coordinates performed at each step, that in the case of lower dimensional tori are at least three (as there are at least three generating functions to remove terms with different degree in the square root of the actions, that is $0$, $1$ and $2$). 
The faster convergence rate of the parametrization method, combined with the simplest interplay between the estimates needed to produce a formal proof and the ones needed for a computer-assisted proof, make this method more promising. However, for the mentioned reasons, the main step towards the direction of a computer-assisted proof is to produce a formal proof of the convergence of the procedure, that we will present in a forthcoming paper~\cite{CaraccioloFiguerasHaroTheory}.

\subsection*{Acknowledgments}

C.C. acknowledges the partial support from the scholarship ``Esseen, L. and C-G., for mathematical studies''.
J.-Ll.F. has been partially supported by the Swedish VR Grant 2019-04591, and 
A.H. has  been supported by the Spanish grant PID2021-125535NB-I00 (MCIU/AEI/FEDER, UE), 
and by the Spanish State Research Agency, through the
Severo Ochoa and Mar\'ia de Maeztu Program for Centers and Units of
Excellence in R\&D (CEX2020-001084-M).
Some computations were enabled by resources in project 
NAISS 2023/5-192 provided by the National Academic Infrastructure for Supercomputing in Sweden (NAISS) 
at UPPMAX, funded by the Swedish Research Council through grant agreement no. 2022-06725.

\bibliographystyle{plain}
\bibliography{biblio}
\appendix
\section{Auxiliary results}
\label{app}

As it is customary with the resolution of cohomological equations, we have the following lemma.
We omit its proof since it is standard.
\begin{lemma}
\label{lemma-russmann}
Let $v$ be $\Oscr(E)$. Then both solutions to the equations
$\Lie{\omega}{u_1}=v$ and $\Lie{\omega}{u_2}\pm\GammaOb u_2=v$ are $\Oscr(E)$. 
\end{lemma}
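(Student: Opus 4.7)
The plan is to solve each cohomological equation by passing to Fourier coefficients, use the Diophantine--Melnikov conditions of Definition~\ref{diof+mel} to bound the resulting small divisors, and convert these pointwise Fourier bounds into the asserted norm bound via a classical R\"ussmann argument.

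For the first equation, writing $v(\theta)=\sum_{k\in\ZZ^d}\hat v_k\,e^{\iu k\cdot\theta}$ and matching Fourier modes in $\Lie{\omega}{u_1}=v$ yields $\hat u_{1,k}=\iu\,\hat v_k/(k\cdot\omega)$ for $k\neq 0$. The average $\langle u_1\rangle$ is a free parameter (fixed elsewhere by a normalization or a twist condition, as is done in Section~\ref{sec:K}), and solvability imposes $\hat v_0=0$, which is structurally ensured in the instances where the lemma is invoked. The Diophantine condition then gives $|\hat u_{1,k}|\leq \gamma^{-1}|k|_1^\tau\,|\hat v_k|$.

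For the second equation, I would diagonalize each $2\times 2$ Fourier block coming from the pairing of the $j$-th and $(j+n-d)$-th components. Concretely, the complex combinations $w_j^{\pm}=u_2^{\,j}\pm\iu\,u_2^{\,j+n-d}$ decouple $\Lie{\omega}{u_2}\pm\GammaOb u_2=v$ into scalar equations $\Lie{\omega}{w_j^\pm}\mp\iu\beta_j\,w_j^\pm=\tilde v_j^\pm$, with Fourier solutions $\hat w_{j,k}^\pm=\iu\,\hat{\tilde v}_{j,k}^\pm/(k\cdot\omega\mp\beta_j)$. Undoing the diagonalization and invoking the first Melnikov condition produces an analogous pointwise bound $|\hat u_{2,k}|\leq C\,\gamma^{-1}|k|_1^\tau\,|\hat v_k|$.

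The final step is to convert these Fourier bounds into the norm bound $u_i=\Oscr(E)$. I would use the classical inequality $\sup_{k\in\ZZ^d\setminus\{0\}}|k|_1^\tau\,e^{-|k|_1\delta}\leq C_\tau\,\delta^{-\tau}$ to pass from an analytic norm on a complex strip of width $\rho$ to one of width $\rho-\delta$, which gives $\|u_i\|_{\rho-\delta}\leq C(\gamma,\tau)\,\delta^{-\tau}\|v\|_\rho$ and hence $u_i=\Oscr(E)$ whenever $v=\Oscr(E)$. The only delicate point, and the reason this estimate is treated as standard, is a careful bookkeeping of the loss of analyticity $\delta$ and of the constants depending on $\gamma$, $\tau$ and the dimension; no new idea is required beyond the two classical ingredients just listed.
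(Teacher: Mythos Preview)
Your proposal is correct and supplies precisely the standard R\"ussmann-type argument (Fourier inversion, Diophantine and first Melnikov bounds on the small divisors, loss of analyticity) that the paper alludes to; the paper itself omits the proof entirely, stating only that it is standard.
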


Now we proof a series of lemmas that help us proving Proposition \ref{lemma:Psym}.

\begin{lemma}
\label{lemma-OmegaAB}
Under the hypothesis that both $K$ and $W$ are almost invariant we have 
that $\OmegaLL, \OmegaLW $ are $\Oscr(E_K, E_W)$.
\end{lemma}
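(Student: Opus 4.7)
The plan is to compute $\Lie{\omega}\OmegaLL$ and $\Lie{\omega}\OmegaLW$ directly, exploit the Hamiltonian character of $X_h$ to collapse the leading-order terms, and then apply Lemma~\ref{lemma-russmann} to convert smallness of the Lie derivatives into smallness of the matrices themselves. The only extra ingredient needed for $\OmegaLL$ is information on its average, which will come from the exactness of $\vet\omega$.

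First, I would differentiate the invariance equations \eqref{eq:invariance-appr} to get the key building blocks
\[
\Lie{\omega}L = -\Dz\Xh\comp(\K;\lambda)\,L + \Dif\EK,\qquad \Lie{\omega}(\Omega\comp\K) = -\Dif\Omega\comp\K\cdot \Xh\comp(\K;\lambda) + \Oscr(E_K),
\]
\[
\Lie{\omega}\W = -\Dz\Xh\comp(\K;\lambda)\,\W + \W\GammaOb + \EW.
\]
Applying the derivation property of $\Lie{\omega}$ to $\OmegaLL = L^\top (\Omega\comp\K) L$ and $\OmegaLW = L^\top(\Omega\comp\K)\W$, and substituting the above, the leading contributions organise into the combination $(\Dz\Xh)^\top\Omega + \Dif\Omega\cdot\Xh + \Omega\,\Dz\Xh$ evaluated at $\K$, sandwiched between $L^\top$ and $L$ (resp.\ $\W$). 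But $X_h$ is Hamiltonian and $\vet\omega$ is closed, so $\mathcal{L}_{X_h}\vet\omega = 0$; in coordinates this is precisely $(\Dz\Xh)^\top\Omega + \Dif\Omega\cdot\Xh + \Omega\,\Dz\Xh = 0$. This identity kills the leading terms and leaves
\[
\Lie{\omega}\OmegaLL = \Oscr(E_K),\qquad \Lie{\omega}\OmegaLW - \OmegaLW\,\GammaOb = \Oscr(E_K,E_W).
\]

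For $\OmegaLL$, Lemma~\ref{lemma-russmann} for the plain cohomological equation $\Lie{\omega}u = v$ requires zero average of $v$ and does not by itself pin down $\langle u\rangle$. To fix the average I would use that $\vet\omega = \dif\vet\alpha$ is exact, so
\[
K^*\vet\omega = \dif(K^*\vet\alpha),
\]
which in components gives $(\OmegaLL)_{ij}(\theta) = \partial_{\theta_i}\bigl(\vet a\comp\K\cdot L_j\bigr)-\partial_{\theta_j}\bigl(\vet a\comp\K\cdot L_i\bigr)$, hence $\langle\OmegaLL\rangle = 0$. Combined with $\Lie{\omega}\OmegaLL = \Oscr(E_K)$ and the Diophantine condition on $\omega$, Lemma~\ref{lemma-russmann} then yields $\OmegaLL = \Oscr(E_K) = \Oscr(E_K,E_W)$.

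For $\OmegaLW$ no averaging issue arises: transposing rowwise and using $\GammaOb^\top = -\GammaOb$, the equation $\Lie{\omega}\OmegaLW - \OmegaLW\GammaOb = \Oscr(E_K,E_W)$ becomes a system of the form $\Lie{\omega}u + \GammaOb u = \Oscr(E_K,E_W)$ on each row, which by the first Melnikov condition (Definition~\ref{diof+mel}) and the second part of Lemma~\ref{lemma-russmann} has solution of size $\Oscr(E_K,E_W)$. The main obstacle in this plan is purely algebraic: correctly bookkeeping the trilinear expressions $L^\top(\cdots)V$ and verifying that the preservation identity $(\Dz\Xh)^\top\Omega + \Dif\Omega\cdot\Xh + \Omega\,\Dz\Xh = 0$ indeed cancels every leading term. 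Everything else reduces to invoking Lemma~\ref{lemma-russmann}.
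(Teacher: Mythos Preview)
Your proposal is correct and follows essentially the same route as the paper: compute $\Lie{\omega}\OmegaLL$ and $\Lie{\omega}\OmegaLW$ via the Leibniz rule, use the Hamiltonian identity $(\Dz\Xh)^\top\Omega+\DO[\Xh]+\Omega\,\Dz\Xh=0$ (this is precisely equation~\eqref{cancellation}) to cancel the leading terms, invoke exactness of $\vet\omega$ to get $\langle\OmegaLL\rangle=0$, and conclude via Lemma~\ref{lemma-russmann}. The only minor slip is that the approximate invariance equation for $\W$ in~\eqref{eq:invariance-appr} involves $\Gammaab$, not $\GammaOb$, so the cohomological equation you actually obtain is $\Lie{\omega}\OmegaLW-\OmegaLW\Gammaab=\Oscr(E_K,E_W)$, exactly as in the paper's proof.
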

\begin{proof}
As discussed in Lemma 4.3 in~\cite{Haro-Luque-2019}, $\OmegaLL$ is $\Oscr(\EK)$ in view of the almost invariance of $K$. For what concerns the other term, let us preliminarily compute the following quantity: 
$$
\Lie{\omega}{\OmegaLW }- \OmegaLW  \Gammaab = \Lie{\omega}{L^\top}\Omega \comp K\W + L^\top\Lie{\omega}{\Omega\comp K}W + L^\top \Omega\comp K \Lie{\omega}{W}- \OmegaLW  \Gammaab.
$$
Now using the properties (which can be obtained from the invariance equation)
\begin{equation*}
\Lie{\omega}{\DK} = \Dif \EK-\DXh\comp(\K;\lambda) \DK
\end{equation*}
and
\begin{equation}\label{lieOmega}
\Lie{\omega}{\Omega\comp\K} = \DO\comp \K [\EK-\Xh\comp(\K;\lambda)]\ ,
\end{equation}
and in view of the equality~\eqref{cancellation}, one gets
\begin{align*}
\Lie{\omega}{\OmegaLW }- \OmegaLW  \Gammaab = & \,(\Dif \EK)^\top\Omega\comp K \W - L^\top (\DXh\comp(\K;\lambda))^\top \Omega\comp K \W \\& +L^\top \DO \comp(\K;\lambda) [\EK] \W - L^\top \DO\comp(\K) [\Xh\comp(\K;\lambda)] \W \\ &- L^\top\Omega\comp K \DXh\comp(\K;\lambda) W +L^\top\Omega\comp K \W \Gammaab + L^\top \Omega\comp K \EW- \OmegaLW \Gammaab \\ = &  \, (\Dif \EK)^\top\Omega\comp K \W+ L^\top \DO\comp K [\EK] \W+  L^\top \Omega\comp K \EW\,.
\end{align*}
Therefore, $\Lie{\omega}{\OmegaLW }- \OmegaLW  \Gammaab$ is $\Oscr(E_K,E_W)$ and the result for $\Omega_{LW}$ follows by Lemma \ref{lemma-russmann}.
\end{proof}

\begin{lemma}
The dummy parameter $\alpha$ is $\Oscr(E_K, E_W)$. In particular, it is equal to zero if the torus $K$ and 
the normal bundles $W$ are invariant.
\end{lemma}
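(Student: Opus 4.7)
The plan is to derive an algebraic constraint on $\alpha$ by computing the $\omega$-Lie derivative of the symplectic pairing $\OmegaWW = \W^\top(\Omega\comp\K)\W$, in close analogy with the treatment of $\OmegaLW$ in Lemma~\ref{lemma-OmegaAB}. First, I would apply Leibniz,
\begin{equation*}
\Lie{\omega}\OmegaWW = (\Lie{\omega}\W)^\top (\Omega\comp\K)\W + \W^\top \Lie{\omega}(\Omega\comp\K)\W + \W^\top(\Omega\comp\K)\Lie{\omega}\W,
\end{equation*}
then substitute $\Lie{\omega}\W = \EW - \DXh\comp(\K;\lambda)\W + \W\Gammaab$ from the second equation of~\eqref{eq:invariance-appr} together with $\Lie{\omega}(\Omega\comp\K) = \DO\comp\K\,[\EK - \Xh\comp(\K;\lambda)]$ from~\eqref{lieOmega}. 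The Hamiltonian character of $\Xh$ gives the cancellation identity $(\DXh)^\top\Omega + \Omega\,\DXh + \DO[\Xh] = 0$ (the same identity used in the proof of Lemma~\ref{lemma-OmegaAB}), which eliminates the $\DXh$- and $\Xh$-contributions. After simplification one obtains
\begin{equation*}
\Lie{\omega}\OmegaWW - \Gammaab^\top\OmegaWW - \OmegaWW\,\Gammaab = \Oscr(\EK,\EW).
\end{equation*}

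I would then take the $\theta$-average. Since $\langle \Lie{\omega}f\rangle = 0$ for any periodic $f$ and $\Gammaab$ is $\theta$-independent, this reduces to the purely algebraic constraint
\begin{equation*}
\Gammaab^\top M + M\,\Gammaab = \Oscr(\EK,\EW), \qquad M := \langle \OmegaWW\rangle.
\end{equation*}
Writing $M$ in block form $M = \begin{pmatrix}A & B \\ -B^\top & C\end{pmatrix}$ with $A$, $C$ antisymmetric (by the antisymmetry of $M$), a direct computation shows that the $(1,2)$-block of $\Gammaab^\top M + M\,\Gammaab$ equals $B\diag(\alpha) + \diag(\alpha)B - A\diag(\beta) + \diag(\beta)C$. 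Its diagonal entries collapse, using $A_{ii} = C_{ii} = 0$, to $2B_{ii}\,\alpha_i$, and hence the diagonal of the $(1,2)$-block of the averaged equation yields
\begin{equation*}
2\,B_{ii}\,\alpha_i = \Oscr(\EK,\EW), \qquad i = 1, \ldots, n-d.
\end{equation*}

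Under the standard normalization of $\W$ so that $\OmegaWW$ is close to the canonical form $\Omega_{n-d}$ (see the remark after Proposition~\ref{lemma:Psym} and the initialization in Section~\ref{sec:appl}), one has $B_{ii}\approx -1 \neq 0$. The linear system above is then invertible, and $\alpha_i = \Oscr(\EK,\EW)$ follows. In the exactly invariant case $\EK = \EW = 0$, the right-hand side vanishes identically and one concludes $\alpha = 0$, as asserted.

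The main obstacle is precisely this last extraction step: without some control on the diagonal entries $B_{ii}$ of the transverse block of $\langle\OmegaWW\rangle$ the constraint $2B_{ii}\,\alpha_i = \Oscr(\EK,\EW)$ would not determine $\alpha_i$. The approximate preservation of $\OmegaWW \approx \Omega_{n-d}$ along the Newton iterations, enforced by the initialization described in Section~\ref{sec:appl}, guarantees that $B_{ii}$ stays bounded away from zero, which is exactly what is needed to close the argument.
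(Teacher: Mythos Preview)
Your proposal is correct and follows essentially the same route as the paper: both compute $\Lie{\omega}\OmegaWW$ via Leibniz, invoke the cancellation identity~\eqref{cancellation} to kill the $\Xh$-terms, arrive at $\Lie{\omega}\OmegaWW - \Gammaab^\top\OmegaWW - \OmegaWW\Gammaab = \Oscr(\EK,\EW)$, and then extract from the averaged equation the scalar constraints $2\alpha_i\,(\text{diagonal entry}) = \Oscr(\EK,\EW)$.

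The only noteworthy difference is in how the averaged constraint is isolated. The paper first argues, via the cohomological structure (as in the analysis of~\eqref{eq:op2}), that $\langle\OmegaWW\rangle$ must have the block-diagonal form $\begin{pmatrix}0 & -\diag(b)\\ \diag(b) & 0\end{pmatrix}$ up to $\Oscr(\hat E)$, and then reads off $2\alpha_i b_i = \Oscr(\hat E)$, invoking non-degeneracy of $\OmegaWW$ to conclude $b_i\neq 0$. You instead average immediately, keep $M=\langle\OmegaWW\rangle$ in general antisymmetric block form, and observe that the diagonal of the $(1,2)$-block of $\Gammaab^\top M + M\Gammaab$ collapses to $2B_{ii}\alpha_i$ by antisymmetry of the diagonal blocks. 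Your path is slightly more direct and sidesteps the need to first determine the off-diagonal structure of $\langle\OmegaWW\rangle$; the paper's path yields a bit more information about $\langle\OmegaWW\rangle$ along the way. Your appeal to the normalization $\OmegaWW\approx\Omega_{n-d}$ to guarantee $B_{ii}\neq 0$ is the concrete instance of the paper's ``$\OmegaWW$ non-degenerate'' hypothesis, so the closing step is the same in substance.
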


\begin{proof}
One may argue that at each step we are solving an equation 
$$
\Lie{\omega}{\W}+ \DXh\comp(\K; \lambda)\W - \W \Gammaab = \EW,
$$
for a given $\alpha$, while the goal is to solve the equation for $\alpha = 0$ and thus guarantee ellipticity. 
Here we underline that at each step $\alpha$ is $\Oscr(E_K, E_W)$ and at the end of the procedure this parameter must be in fact zero. 

\noindent 
Indeed, if $W$, $K$ and $\lambda$ satisfy
$$
\Lie{\omega}{\W}+ \DXh\comp(\K; \lambda)\W - \W \Gammaab = 0,
$$
then, using the equality 
\begin{equation}
\label{cancellation}
(\DXh\comp(\K;\lambda))^\top \Omega\comp K +\DO \comp \K[\Xh\comp(\K;\lambda)]+ \Omega\comp \K \DXh\comp(\K;\lambda) = O_{2n}
\end{equation}
(see Lemma 4.3 in~\cite{Haro-Luque-2019}) we obtain that $\OmegaWW$ satisfies the following equation
\begin{align*}
\Lie{\omega}{\OmegaWW} =& \Lie{\omega} W^\top  \Omega\comp K W + W^\top\Lie{\omega}\Omega\comp \K W + W^{\top}\Omega \comp K \Lie{\omega} W \nonumber\\
=& \Gammaab^\top \Omega_{WW}+\Omega_{WW}\Gammaab+\nonumber\\
& \EW^\top\Omega\comp K W+W^\top \DO\comp K[\EW]W
+W\Omega\comp K \EW\label{eq:third eq omegaww}
%W \DO\comp K [\Lie{\omega}K] W \nonumber\\ &+ W^\top \Omega\comp K W \Gammaab - W^\top \Omega\comp K \DXh\comp (K;\lambda) W \nonumber\\
%=& \Gammaab^\top  \OmegaWW +\OmegaWW\Gammaab. \label{eq:third eq omegaww}
\end{align*}
This can be rewritten as
\begin{equation}
\label{eq:cohoOmegaWW}
\Lie{\omega}{\OmegaWW}-\Gammaab^\top \Omega_{WW}-\Omega_{WW}\Gammaab=\hat E,
\end{equation}
where $\hat E$ is $\Oscr(E_K, E_W)$. Similarly as in solving Equation \eqref{eq:op2}
we obtain that $\Omega_{WW}-\langle\Omega_{WW}\rangle=\Oscr(\hat E)$. Moreover, 
since $\OmegaWW$ is antisymmetric it implies 
\begin{equation}
\label{eq:averOmegaWW}
\langle\Omega_{WW}\rangle=
\begin{pmatrix}
0 & -\diag(b)\\
\diag(b) & 0
\end{pmatrix}
+\Oscr(\hat E).
\end{equation}
with $b\in\mathbb R^{n-d}$.
Finally, we obtain that the averages of both sides in Equation \eqref{eq:cohoOmegaWW} are equal.
Using \eqref{eq:averOmegaWW} and writing the averages componentwise we obtain 
$2\alpha_i b_i=\Oscr(\hat E)$. But, since $b_i\neq 0$ ($\OmegaWW$ is non-degenerate), 
we get that $\alpha_i=\Oscr(\hat E)$.
\end{proof}

\begin{lemma}
Under the hypothesis that both $K$ and $W$ are almost invariant we have 
that the matrix $P(\theta)$ defined as in~\eqref{eq:P},~\eqref{L}, \eqref{N} and~\eqref{N-elem} is almost symplectic, i.e.,
\begin{equation*}
\label{almost-sym}
P^\top \OK P = \begin{pmatrix}
J_d & O_{2d\times2( n-d)} \\ O_{2(n-d)\times 2d} & \OmegaWW 
\end{pmatrix} + E_{sym}
\end{equation*}
being $E_{sym} = \Oscr(E_K,E_W)$ and, in particular, with the following expression
\begin{equation*}
\label{errsym}
E_{sym} = \begin{pmatrix}
\OmegaLL & \OmegaLL A+ \OmegaLW C & \OmegaLW \\
 -A^\top\OmegaLL -C^\top\OmegaLW ^\top & A^\top\OmegaLL A+B^\top \OmegaLL B +A^\top\OmegaLW C +C^\top \OmegaLW A & A^\top \OmegaLW \\
 -\OmegaLW ^\top & -\OmegaLW ^\top A & 0
\end{pmatrix}\,.
\end{equation*}
\end{lemma}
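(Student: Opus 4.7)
The plan is to compute the $3\times 3$ block expansion of $P^\top \OK P$ directly, using the partition of $P=(L\ N\ W)$ into column-blocks of sizes $(d,d,2(n-d))$, and then to identify each entry of $E_{sym}$ as the part that explicitly carries a factor of $\Omega_{LL}$ or $\Omega_{LW}$. Once this is done, the conclusion $E_{sym}=\Oscr(E_K,E_W)$ follows immediately from Lemma \ref{lemma-OmegaAB} together with the boundedness of $A,B,C$.

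All computations rest on three algebraic identities derived from the compatible triple $(\Omega,G,J)$: the defining relation $\Omega J=-G$, its transposed version $J^\top\Omega=G$ (using $\Omega^\top=-\Omega$, $G^\top=G$), and $GJ=\Omega$ (multiply $\Omega J=-G$ on the right by $J$ and use $J^2=-I$). Precomposing with $\K$, these reduce every mixed product involving $(J\comp\K)$ to a product of blocks of the form $\Omega_{\cdot\cdot}$ or $G_{\cdot\cdot}$. I would also use the defining relations $B=G_{LL}^{-1}$, $C=\Omega_{WW}^{-1}G_{WL}B$, $A=\tfrac12 C^\top\Omega_{WW}C$, together with $B^\top=B$, $G_{WL}^\top=G_{LW}$, $\Omega_{WW}^\top=-\Omega_{WW}$, whence in particular $A^\top=-A$.

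Three blocks are essentially free: $L^\top\OK L=\Omega_{LL}$, $L^\top\OK W=\Omega_{LW}$, and $W^\top\OK W=\Omega_{WW}$, matching exactly the corresponding entries of the right-hand side. For the $(L,N)$ block I would expand $N=LA+(J\comp\K)LB+WC$ and apply $\Omega\comp\K\cdot(J\comp\K)=-G\comp\K$ to obtain $L^\top\OK N=\Omega_{LL}A-G_{LL}B+\Omega_{LW}C=-I_d+\Omega_{LL}A+\Omega_{LW}C$, which matches the $(1,2)$ block of the claim. For $(W,N)$ the same expansion gives $W^\top\OK N=\Omega_{WL}A-G_{WL}B+\Omega_{WW}C=\Omega_{WL}A$, the cancellation of the last two summands being precisely why $C$ is defined the way it is. The remaining two off-diagonal blocks are then recovered by $\Omega^\top=-\Omega$.

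The main algebraic work is the $(N,N)$ block. Expanding $N^\top\OK N$ into nine summands and simplifying with the identities above, the terms organize into three groups: (a) expressions carrying a factor $\Omega_{LL}$ or $\Omega_{LW}$, which make up the $(2,2)$ entry of $E_{sym}$; (b) the pair $B^\top G_{LL}A-A^\top G_{LL}B=A-A^\top=2A$ (using $A^\top=-A$); and (c) the pair $B^\top G_{LW}C-C^\top G_{WL}B$ together with the diagonal contribution $C^\top\Omega_{WW}C$. Using $C=\Omega_{WW}^{-1}G_{WL}B$ and $\Omega_{WW}^{-\top}=-\Omega_{WW}^{-1}$, each summand in (c) reduces to a multiple of the single matrix $BG_{LW}\Omega_{WW}^{-1}G_{WL}B$, and the prefactor $\tfrac12$ in $A=\tfrac12 C^\top\Omega_{WW}C$ is exactly what is needed so that groups (b) and (c) cancel against each other, leaving only group (a) in the $(2,2)$ slot.

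The main obstacle I anticipate is the bookkeeping in the $(N,N)$ block, in particular tracking signs (since both $\Omega_{WW}^{-\top}=-\Omega_{WW}^{-1}$ and $A^\top=-A$ are involved simultaneously) and verifying that the coefficient $\tfrac12$ in the definition of $A$ is precisely what produces the leading-order cancellation. All other blocks are essentially mechanical applications of the three identities listed above, and once the $(N,N)$ cancellation is verified, the bound $E_{sym}=\Oscr(E_K,E_W)$ is immediate from Lemma \ref{lemma-OmegaAB}.
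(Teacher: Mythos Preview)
Your proposal is correct and follows essentially the same route as the paper's proof: a direct block-by-block expansion of $P^\top\OK P$, reduction of all $(J\comp\K)$--terms via the compatible-triple identities $J^\top\Omega=G$ and $\Omega J=-G$, and then use of the defining relations for $A,B,C$ to exhibit the cancellations (in particular $2A=C^\top\OmegaWW C$ in the $(N,N)$ block), with the final estimate supplied by Lemma~\ref{lemma-OmegaAB}. Your grouping of the $(N,N)$ computation into parts (a), (b), (c) is slightly more streamlined than the paper's term-by-term treatment, but the substance is identical.
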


\begin{proof}
In view of the definition of $P$, we get the following:
$$
P^\top\Omega\comp K P = \begin{pmatrix}
\OmegaLL & \OmegaLN & \OmegaLW \\
-\OmegaLN^\top &\OmegaNN   & \OmegaNW  \\
-\OmegaLW ^\top & -\OmegaNW ^\top  & \OmegaWW 
\end{pmatrix} .
$$
Therefore, we have to check that the blocks of this matrix are of the form 
$$
\begin{pmatrix} 
O_d & -I_d & O_{d\times 2(n-d)} \\ 
I_d & O_d& O_{d\times 2(n-d)} \\
O_{2(n-d)\times d} & O_{2(n-d)\times d} &   \OmegaWW 
\end{pmatrix} + E_{sym}.
$$
We already showed in Lemma~\ref{lemma-OmegaAB} that $\OmegaLL$ is $\Oscr(\EK)$.
The second term of the first row, 
$\OmegaLN= -I_d + \OmegaLL A+ \OmegaLW C,$ is equal to $-I_d+\Oscr(E_K,E_W)$ since we 
showed in  Lemma~\ref{lemma-OmegaAB} that $\OmegaLW $ and $ \OmegaLL$ are $\Oscr(E_{K,W})$. 
The next term to be considered is 
\begin{align*}
 \OmegaNN  = &(C^\top W^\top+ B^\top L^\top  J ^\top+ A^\top L^\top)\Omega\comp K (LA+ J\comp K  L B + \W C) \\
 =& -C^\top\OmegaLW ^\top A + C^\top W^\top\Omega\comp K J\comp K  L B +C^\top\OmegaWW C
 \\ &+B^\top L^\top (J\comp K)^\top\Omega\comp K L A+ B^\top L^\top (J\comp K) ^\top\Omega\comp K J\comp K  LB \\ 
 &+ B^\top L^\top (J\comp K) ^\top \Omega\comp K \W C
  +A^\top\OmegaLL A + A^\top L^\top \Omega\comp K J\comp K  L  B + A^\top \OmegaLW C\\
 = & (A^\top\OmegaLW C)+(A^\top\OmegaLW C)^\top+ (B^\top L^\top (J\comp K) ^\top\Omega\comp K WC)-(B^\top L^\top  (J\comp K) ^\top \Omega\comp K  WC)^\top \\
 &+C^\top \OmegaWW C + (A^\top  L^\top \Omega\comp K J\comp K L  B)- (A^\top  L^\top  \Omega\comp K J\comp K L  B)^\top  + B^\top \OmegaLL B+ A^\top  \OmegaLL\,.
\end{align*}
With our choice of $A, B$ and $C$, we can rewrite 
$$
B^\top L^\top   (J\comp K) ^\top  \Omega\comp K \W C = B^\top  G_{LW}\Omega^{-1}_{WW} G_{WL}B
$$
and, therefore, we can see that it is antisymmetric in view of the antisymmetry of $\OmegaWW$ and of the resulting property $(\Omega^{-1}_{WW})^\top  = (\Omega^{T}_{WW})^{-1} = -\Omega^{-1}_{WW}$. Moreover, it is equal to $-C^\top \OmegaWW C$. Furthermore, by observing that $A= -A^\top $, we get 
\begin{align*}
\OmegaNN = &(A^\top \OmegaLW C)+(A^\top \OmegaLW C)^\top  -C^\top \OmegaWW C+2A + B^\top \OmegaLL B+ A^\top  \OmegaLL
\end{align*} 
and the result follows by the definition of $A$ and the estimates in Lemma~\ref{lemma-OmegaAB}.

\noindent
Finally, using that $G^\top  = G$ and $(\Omega^{-1}_{WW})^\top   = -\Omega^{-1}_{WW}$ and again Lemma~\ref{lemma-OmegaAB},
\begin{align*}
\OmegaNW = & (C^\top  W^\top + B^\top  L^\top   (J\comp K) ^\top + A^\top  L^\top )\Omega\comp K \W \\=&  C^\top \OmegaWW +B^\top L^\top  (J\comp K) ^\top \Omega\comp K \W + A^\top \OmegaLW \\ =&  B^\top G_{LW}(\Omega^{-1}_{WW})^\top  \OmegaWW  + B^\top G_{LW} + A^\top \OmegaLW ,
\end{align*}
and the result follows.
\end{proof}

The simplecticity of $P$ can be used to determine the inverse of $P$. However, in the case of almost simplecticity, this matrix is an approximation of the inverse of $P$, as it is seen in the following corollary.
\begin{corollary}
\label{corollary:Pinverse}
Under the hypothesis that both $K$ and $W$ are almost invariant with both $E_K$ and $E_W$ being
small enough then $P$ is invertible and has expression
\begin{equation}
\label{eq:almostinvP}
P^{-1} = \Pinv
+E_{inv}
= \begin{pmatrix}
N^\top  \Omega\comp K \\ -L^\top  \Omega	\comp K \\ \OmegaWW ^{-1} W ^\top  \Omega\comp  K 
\end{pmatrix}+E_{inv}\, ,
\end{equation}
with $E_{inv}$ being $\Oscr(E_K, E_W)$.

\end{corollary}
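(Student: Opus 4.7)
The plan is to leverage the almost-symplecticity of $P$ established in the preceding lemma together with the known estimates on the blocks $\OmegaLL, \OmegaLN, \OmegaLW, \OmegaNN, \OmegaNW$ to show that $\Pinv$ is itself an almost-inverse of $P$, and then invoke a Neumann-type argument to promote this to the existence of the true inverse.

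First, I would compute the product $\Pinv \cdot P$ directly from the definitions, writing $P=(L\ N\ W)$ and using $\OmegaLN = L^\top \Omega\comp K\, N$, etc. This yields the block matrix
\begin{equation*}
\Pinv \cdot P =
\begin{pmatrix}
-\OmegaLN^\top & \OmegaNN & \OmegaNW \\
-\OmegaLL & -\OmegaLN & -\OmegaLW \\
-\OmegaWW^{-1}\OmegaLW^\top & -\OmegaWW^{-1}\OmegaNW^\top & I_{2(n-d)}
\end{pmatrix}.
\end{equation*}
From Lemma~\ref{lemma-OmegaAB} and the lemma computing $P^\top\Omega\comp K\, P$, we already know that $\OmegaLL$, $\OmegaLW$, $\OmegaNN$ and $\OmegaNW$ are all $\Oscr(E_K,E_W)$ and that $\OmegaLN = -I_d + \Oscr(E_K,E_W)$. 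Assembling these estimates shows $\Pinv \cdot P = I_{2n} + F$ with $F=\Oscr(E_K,E_W)$.

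Next, I would apply the standard Neumann series argument: once $E_K, E_W$ are small enough that $\|F\|<1$, the matrix $I_{2n}+F$ is pointwise invertible with inverse $(I_{2n}+F)^{-1}=I_{2n}+\tilde F$ where $\tilde F=\Oscr(E_K,E_W)$. Consequently $P$ is invertible and
\begin{equation*}
P^{-1} = (\Pinv \cdot P)^{-1}\,\Pinv = (I_{2n}+\tilde F)\,\Pinv = \Pinv + \tilde F\,\Pinv.
\end{equation*}
Setting $E_{inv} := \tilde F\,\Pinv$ and noting that $\Pinv$ has norm bounded independently of the errors (it is built from $K,W,\Omega\comp K$ and the invertible matrix $\OmegaWW$, whose inverse is uniformly bounded by the previous lemmas), we conclude $E_{inv}=\Oscr(E_K,E_W)$, which is exactly the claim \eqref{eq:almostinvP}.

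The calculation is essentially bookkeeping; the only point requiring some care is controlling $\OmegaWW^{-1}$. Since $\OmegaWW = \langle\OmegaWW\rangle + \Oscr(E_K,E_W)$ with $\langle\OmegaWW\rangle$ non-degenerate (by construction of $W$ in the initialization and its propagation through the Newton scheme), its inverse is uniformly bounded for small enough errors, so the product $\OmegaWW^{-1}\OmegaLW^\top$ appearing in the bottom-left block remains $\Oscr(E_K,E_W)$. Once this is established, the rest of the argument is a one-line Neumann estimate.
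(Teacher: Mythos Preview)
Your proposal is correct and is exactly the intended argument: the paper states this result as a corollary of the almost-symplecticity lemma without further proof, and your computation of $\Pinv\cdot P$ block by block, followed by a Neumann series, is the natural way to flesh it out. One could phrase the same computation slightly more compactly by noting that $\Pinv = \begin{pmatrix}-\Omega_d & 0\\ 0 & \OmegaWW^{-1}\end{pmatrix} P^\top\Omega\comp K$, so $\Pinv P$ is just this block-diagonal matrix times $P^\top\Omega\comp K\, P$, which the preceding lemma already identifies as $\begin{pmatrix}\Omega_d & 0\\ 0 & \OmegaWW\end{pmatrix}+E_{sym}$; but this is only a cosmetic repackaging of what you wrote.
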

In the following lemma we ensure that the problem is almost reducible if both $K$ and $W$ are almost invariant.

\begin{lemma} 
\label{lemma-red}
Under the hypothesis that both $K$ and $W$ are almost invariant with both $E_K$ and $E_W$ being
small enough, the reducibility matrix $P(\theta)$ defined as 
in~\eqref{eq:P},~\eqref{L},~\eqref{N} and~\eqref{N-elem}, the problem is almost reducible, that is 
\begin{equation}
\label{eq:reducibility with error}
P^{-1}\Xscr_P= P^{-1}(\lie{\omega}{P}+ \Dz \Xh \comp (K;\lambda) P)  = \Lambda +\Oscr(E_K,E_W).
\end{equation}
\end{lemma}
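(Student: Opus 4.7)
The plan is to analyze $P^{-1}\Xscr_V$ column block by column block for $V\in\{L,N,W\}$ and match each against the corresponding column of $\Lambda$ up to $\Oscr(\EK,\EW)$. Throughout, Corollary~\ref{corollary:Pinverse} lets us replace $P^{-1}$ by $\Pinv$ at the cost of an additive $\Oscr(\EK,\EW)$, and the preceding lemmas supply the estimates $\OmegaLL,\OmegaLW,\OmegaNW=\Oscr(\EK,\EW)$, $\OmegaLN=-I_d+\Oscr(\EK,\EW)$, and $\alpha=\Oscr(\EK,\EW)$.

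For the $L$-block, differentiating the invariance equation $\lie{\omega}{\K}+\Xh\comp(\K;\lambda)=\EK$ in $\theta$ gives $\lie{\omega}{L}+\Dz\Xh\comp(\K;\lambda)\,L=\Dif\EK$, i.e.\ $\Xscr_L=\Dif\EK=\Oscr(\EK)$, which matches the zero first column of $\Lambda$. For the $W$-block, the invariance equation for $W$ reads $\Xscr_W=W\Gammaab+\EW$; since $\Pinv W=(\OmegaNW,-\OmegaLW,I_{2(n-d)})^\top=(0,0,I_{2(n-d)})^\top+\Oscr(\EK,\EW)$ and $\Gammaab=\GammaOb+\Oscr(\EK,\EW)$, we obtain $P^{-1}\Xscr_W=(0,0,\GammaOb)^\top+\Oscr(\EK,\EW)$, matching the third column of $\Lambda$.

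The heart of the proof, and the main obstacle, is the $N$-block, where $\Pinv\Xscr_N=(N^\top\OK\,\Xscr_N,\,-L^\top\OK\,\Xscr_N,\,\OmegaWW^{-1}W^\top\OK\,\Xscr_N)^\top$. The first entry equals the torsion $T$ exactly by the definition~\eqref{eq:torsion}. For the remaining two entries I would derive the identities
\begin{align*}
\lie{\omega}{\OmegaLN} &= \Xscr_L^\top\OK\,N + L^\top\OK\,\Xscr_N + \Oscr(\EK,\EW),\\
\lie{\omega}{\OmegaNW} &= \Xscr_N^\top\OK\,W + \OmegaNW\Gammaab + \Oscr(\EK,\EW),
\end{align*}
by expanding the Lie derivatives via Leibniz, substituting the invariance equations for $L$ and $W$ together with the formula~\eqref{lieOmega} for $\lie{\omega}{\Omega\comp\K}$, and invoking the pointwise symplectic identity~\eqref{cancellation} so that the two $\DO[\Xh]$ contributions cancel and the unbounded terms $(\Dz\Xh)^\top\Omega$ and $\Omega\,\Dz\Xh$ cancel pairwise. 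Since $\OmegaLN=-I_d+\Oscr(\EK,\EW)$ and $\OmegaNW=\Oscr(\EK,\EW)$, both left-hand sides are $\Oscr(\EK,\EW)$ (via the standard Cauchy bound on a slightly smaller analytic strip); combined with $\Xscr_L=\Oscr(\EK)$ and the antisymmetry $W^\top\OK\,\Xscr_N=-(\Xscr_N^\top\OK\,W)^\top$, this forces both $L^\top\OK\,\Xscr_N$ and $W^\top\OK\,\Xscr_N$ to be $\Oscr(\EK,\EW)$, yielding the middle column $(T,0,0)^\top$ of $\Lambda$. The delicate point is the algebraic bookkeeping: one must orchestrate~\eqref{cancellation},~\eqref{lieOmega}, and the almost-symplecticity estimates in the correct order so that the non-small pieces cancel and only $T$ together with $\Oscr(\EK,\EW)$ remainders survive.
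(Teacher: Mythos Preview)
Your proposal is correct and follows essentially the same route as the paper's proof: replace $P^{-1}$ by $\Pinv$ via Corollary~\ref{corollary:Pinverse}, dispatch the $L$- and $W$-columns directly from the invariance equations, and for the $N$-column expand $\lie{\omega}\Omega_{\cdot\cdot}$ by Leibniz, use~\eqref{lieOmega} and the cancellation identity~\eqref{cancellation} to kill the non-small terms, and conclude via the almost-symplecticity estimates and Cauchy bounds. The only cosmetic difference is that the paper works with $\OmegaWN$ rather than $\OmegaNW$ (avoiding your transpose step), and it writes out the explicit $\Oscr(\EK,\EW)$ remainders such as $L^\top\DO\comp K[\EK]N$ rather than absorbing them immediately.
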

\begin{proof}
Under the Lemma hypotheses we have that $P$ is almost symplectic
and satisfies Equation \eqref{eq:almostinvP}. So, the goal is to see that  
\begin{equation}
\label{eq:almostred}
P^{-1} \Xscr_P = 
\begin{pmatrix}
N^\top  \Omega\comp K \Xscr_L  & N^\top  \Omega\comp K \Xscr_N  & N^\top  \Omega\comp K \Xscr_W\\
-L^\top  \Omega\comp K \Xscr_L  & -L^\top  \Omega\comp K \Xscr_N  & -L^\top  \Omega \comp K\Xscr_W \\
\OmegaWW ^{-1} W^\top  \Omega\comp K \Xscr_L  & \OmegaWW ^{-1}W^\top  \Omega\comp K \Xscr_N  & \OmegaWW ^{-1} W^\top  \Omega \comp K \Xscr_W
\end{pmatrix} +E_{inv}\Xscr_P 
\end{equation}
is equal to 
$$
\begin{pmatrix}
O_{d} & T & O_{d\times 2(n-d)} \\ O_{d} & O_{d} & O_{d\times 2(n-d)} \\ O_{2(n-d)\times d} & O_{2(n-d)\times d}& \GammaOb
\end{pmatrix}  +\Oscr(E_K,E_W).
$$
First, because of Corollary \ref{corollary:Pinverse} we have that $E_{inv}\Xscr_P$ is $\Oscr(E_{\K}, E_W)$.
Notice that all terms in \eqref{eq:almostred} containing $\Xscr_L$ 
are $\Oscr(E_K)$ since  
$\Xscr_L= \lie{\omega}{L} + \DXh \comp (K;\lambda)L = \Dif  \EK$ and 
$\Dif \EK$ is $\Oscr(E_K)$ because of Cauchy inequality for analytic functions.

For the second column, the first term is equal to $T$ by definition.
The second term is 
$$
-L^\top \Omega\comp K \Xscr_N =  -L^\top  \Omega\comp K  \Lie{\omega}N -L^\top \Omega\comp K  \DXh\comp (K;\lambda) N.
$$
Using that 
$\lie{\omega}{\OmegaLN}= \lie{\omega}{L^\top }\Omega\comp K  N + L^\top \lie{\omega}{\Omega\comp K }N + L^\top \Omega\comp K  \lie{\omega}{N}$ and the formul{\ae}~\eqref{lieOmega} and~\eqref{cancellation}, one obtains
\begin{align*}
-L^\top \Omega \comp K \Xscr_N= & \lie{\omega}{L^\top }\Omega\comp K  N + L^\top \lie{\omega}{\Omega\comp K }N -\lie{\omega}{\OmegaLN}-L^\top \Omega\comp K  \DXh \comp (K;\lambda) N\\
=& -L^\top  (\DXh\comp (\K;\lambda))^\top \Omega\comp K  N + \Dif \EK^\top \Omega\comp K  N+ L^\top  \DO\comp K [\EK]N\\&- L^\top  \DO\comp K  \Xh\comp (K;\lambda)  N  
-\lie{\omega}{\OmegaLN}-L^\top \Omega\comp K  \DXh \comp (K;\lambda) N\\ =& \Dif \EK^\top  \Omega \comp K  N -\lie{\omega}{\OmegaLN}+ L^\top  \DO\comp K [\EK] N
\end{align*}
and it is, therefore, of $\Oscr(E_K,E_W)$.
Similarly, we can show that 
\begin{align*}
\OmegaWW ^{-1} W^\top  \Omega \comp K \Xscr_N = & \OmegaWW ^{-1}(-\lie{\omega}{W^\top }\Omega \comp K N - W^\top \lie{\omega}\Omega\comp K  N +\lie{\omega}{\OmegaWN} + W^\top \Omega \comp K \DXh \comp (K;\lambda) N)\\ =& -\OmegaWW ^{-1}(\EW^\top \Omega\comp K  N -\Gammaab^\top  \OmegaWN- W^\top  \DO\comp K [\EK] N + \lie{\omega}{\OmegaWN})
\end{align*}
and, therefore, it is $\Oscr(E_{K,W})$.

The third column contains the following terms:
$$
N^\top  \Omega\comp K  \Xscr_W = N^\top  \Omega(\lie{\omega}{W}+ \Dz \Xh\comp (K;\lambda)  W) = N^\top  \Omega \comp K(W\Gammaab + \EW) = \OmegaNW \Gammaab + N^\top \Omega \comp K \EW\,,
$$
$$
-L^\top  \Omega \comp K \Xscr_W = -\OmegaLW \Gammaab - L^\top \Omega\comp K  \EW\,,
$$
$$
\OmegaWW ^{-1} W^\top  \Omega \comp K \Xscr_W = \Gammaab + \OmegaWW ^{-1} W^\top  \Omega \comp K \EW\,=\GammaOb + \Gamma_{\alpha,0} + \OmegaWW ^{-1} W^\top  \Omega  \comp K\EW .
$$
Therefore, using the fact that $\OmegaLW $, $\OmegaNW$ and $\alpha$ are $\Oscr(E_K,E_W)$ gives
us the desired smallness.
\end{proof}

\end{document}